\address{\newline{\normalsize Courant Institute, NYU, 251 Mercer str., New York, NY 10012, USA}\newline{\it E-mail address}:
karzhema@cims.nyu.edu}
\makeatletter\@addtoreset{equation}{section}\makeatother
\makeatletter\@addtoreset{subsection}{equation}\makeatother
\newtheorem{theorem}[equation]{Theorem}
\newtheorem{proposition}[equation]{Proposition}
\newtheorem{lemma}[equation]{Lemma}
\newtheorem{corollary}[equation]{Corollary}
\newtheorem{question}[equation]{Question}
\newtheorem{theorem-definition}[equation]{Theorem-definition}
\theoremstyle{definition}
\newtheorem{example}[equation]{Example}
\newtheorem{definition}[equation]{Definition}
\theoremstyle{remark}
\newtheorem{remark}[equation]{Remark}
\newcommand{\fie}{{\bf k}}
\begin{document}

\Large

\title{One construction of a K3 surface with a dense set of
rational points}

\author{Ilya Karzhemanov}

\begin{abstract}
We prove that there exists a number field $\fie$ and a smooth
projective $\mathrm{K3}$ surface $S_{22}$ (of genus $12$) over
$\fie$ such that the geometric Picard number of $S_{22}$ is equal
to $1$ and the $\fie$-rational points of $S_{22}$ are Zariski
dense.
\end{abstract}

\sloppy

\maketitle

\bigskip

\section{Introduction}
\label{section:introduction}

The present paper is concerned with the density problem for the
set of rational points on $\mathrm{K3}$ surfaces defined over a
number field. The following is the main question in the area:

\begin{question}
\label{theorem:potential-density-problem} Let $\frak{S}$ be a
smooth projective $\mathrm{K3}$ surface over a number field $\fie$
(this situation is denoted as $\frak{S}\slash\fie$). Are the
$\fie$-rational points (or simply \emph{$\fie$-points} for short)
of $\frak{S}$ potentially dense? In other words, is the set
$\frak{S}(\fie)$ of $\fie$-points Zariski dense in $\frak{S}$,
after possibly replacing $\fie$ with its finite extension?
\end{question}

Question~\ref{theorem:potential-density-problem} is a version of
modified \emph{Weak Lang's Conjecture} (see \cite[Conjecture
1.6]{harris-tschinkel}) and is expected to always have the
positive answer (see \cite{abromovich}, \cite{hassett},
\cite{tschinkel} and references therein for excellent surveys on
the density problem). Unfortunately, only the partial evidence for
both possible answers to
Question~\ref{theorem:potential-density-problem} has been obtained
so far (see \cite{bogomolov-tschinkel}, \cite{ellenberg},
\cite{hassett}, \cite{van-luijk}), in that one did not have yet a
single example of $\frak{S}$ (and $\fie$) with the geometric
Picard number $1$ and Zariski dense set $\frak{S}(\fie)$.

The present paper aims to eliminate the latter defect:

\begin{theorem}
\label{theorem:main} There exists a number field $\fie$ and a
smooth projective $\mathrm{K3}$ surface $S_{22}\slash\fie$ such
that the geometric Picard number of $S_{22}$ is $1$, $S_{22}$ is
of genus $12$, and the set $S_{22}(\fie)$ is Zariski dense in
$S_{22}$.\footnote{The proof of Theorem~\ref{theorem:main}, as
will be described shortly, might allow one, in principle,
establish the same claim for all \emph{BN-general}
$S_{22}\slash\fie$ (cf. Definition~\ref{definition:bn-general}
below), including those with geometric Picard number $1$ of
course. There are, however, certain technical difficulties with
this generalization, and we are not going to address these at
present (but see some discussion in
Remark~\ref{remark:test-set-of-sections}).}
\end{theorem}

Recall that the genus assumption in Theorem~\ref{theorem:main}
implies the surface $S_{22}$ can be embedded into the projective
space $\mathbb{P}^{12}$ as a subvariety of degree $22$.

Let us briefly describe our approach towards the proof of
Theorem~\ref{theorem:main}. Firstly, there exists a $\mathrm{K3}$
surface $S$ whose (geometric) Picard lattice $\mathrm{Pic}\,S$ is
generated by a very ample divisor $H$ and a $(-2)$-curve $C_0$,
satisfying $(H^2) = 70$ and $H \cdot C_0 = 2$ (see
Proposition~\ref{theorem:r-polarized} below). In particular, the
pair $(S,H)$ represents a \emph{polarized $\mathrm{K3}$ surface of
genus $36$} (cf. Definition~\ref{definition:bn-general}). Further,
the divisor $H - 4C_0$ also provides a polarization on $S$, of
genus $12$ (see Lemma~\ref{theorem:polarizations}), and the
surface $(S, H - 4C_{0})$ turns out to be \emph{BN general} (see
Lemma~\ref{theorem:bn-general}). In particular, due to S.\,Mukai
(whose results we recall in
Theorem-definition~\ref{theorem:existence-if-rigid-v-b} and
Remark~\ref{remark:s-12-is-unirational}), there exists a
\emph{rigid} vector bundle $E_3$ on $S$ of rank $3$, such that
$\dim H^{0}(S, E_{3}) = 7$ and the first Chern class of $E_3$
equals $H - 4C_0$. Note that such $E_3$ is unique and determines a
morphism $\Phi_{\scriptscriptstyle E_{3}} : S \longrightarrow G(3,
7)$ into the Grassmannian $G(3, 7) \subset
\mathbb{P}(\bigwedge^{3}\mathbb{C}^{7})$ (cf.
Remark~\ref{remark:when-e-gives-a-morphism}). Moreover,
$\Phi_{\scriptscriptstyle E_{3}}$ coincides with the embedding $S
\hookrightarrow \mathbb{P}^{12}$ given by the linear system $|H -
4C_{0}|$, and the surface $S = \Phi_{E_{3}}(S) \subset G(3, 7)
\cap \mathbb{P}^{12}$ can be described by explicit equations on
$G(3, 7)$ (see Theorem~\ref{theorem:mukai}).

All these constructions apply without change to any \emph{general}
(as a point in moduli) polarized $\mathrm{K3}$ surface
$(S_{22},L_{22})$ of genus $12$ (cf.
Example~\ref{example:ex-of-bn-gen}). Moreover, all these
constructions can be executed over an appropriate number field
$\fie$, which makes one able to use the geometric description of
surfaces $S$ and $S_{22}$ (now defined over $\fie$) to study their
arithmetical properties (see {\ref{subsection:pre-223}} below for
details). The first observation in this way is that the set of
$\fie$-points is Zariski dense in $S$ (see
Proposition~\ref{theorem:my-potential-density}).

\begin{remark}
\label{remark:phenomenon-for-k-3-with-small-picard} One easily
checks that the surface $S$ does not admit any polarization of
genus $ < 12$. At the same time, $S$ is one of the examples of
smooth projective $\mathrm{K3}$ surfaces over $\fie$, containing a
smooth rational curve and Zariski dense set of $\fie$-points.
Other examples include the minimal resolution of the double cover
of $\mathbb{P}^2$ ramified in a general sextic with one node
(genus $2$ case) and a general quartic surface in $\mathbb{P}^3$
with a line (genus $3$ case); these two types of surfaces also
possess the density property of $\fie$-points (see
\cite[Proposition 3.1]{bogomolov-tschinkel-1}, \cite[Theorem
1.5]{harris-tschinkel}) and we hope this phenomenon can be
exploited further. For instance, modifying our arguments from the
proof of Theorem~\ref{theorem:main} appropriately, one could try
to settle down Question~\ref{theorem:potential-density-problem} in
the case of lower genera (starting with e.g. $\le 3$). On the
other hand, let us mention that there are smooth projective
$\mathrm{K3}$ surfaces over $\fie$, having geometric Picard number
$\ge 2$ and Zariski dense set of $\fie$-points, but without any
$(-2)$-curves (see \cite{silverman}). Heuristically, this setting
is different from the one we have started this Remark with, and so
one would probably need another method here to attack
Question~\ref{theorem:potential-density-problem}.
\end{remark}

Further, the main technical result used in the proof of
Theorem~\ref{theorem:main}, namely that
Proposition~\ref{theorem:many-seFctions} below, was inspired by
the following:

\begin{theorem}[{see \cite[Theorem 1]{hassett-tschinkel}}]
\label{theorem:hassett-tschinkel-density-over-k-t} Let $B$ be a
complex curve and $F := \mathbb{C}(B)$ its function field. There
exist non-isotrivial smooth $\mathrm{K3}$ surfaces over $F$ of any
genus, varying between $2$ and $10$, which have geometric Picard
number $1$ and Zariski dense set of $F$-points.
\end{theorem}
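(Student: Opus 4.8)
The plan is to realise the required surfaces as generic fibres of one-parameter families of genus-$g$ $\mathrm{K3}$ surfaces over $\mathbb{P}^{1}$, starting from \emph{elliptic} $\mathrm{K3}$ surfaces — for which density of points over a function field comes from the Mordell--Weil group — and then deforming so as to drop the geometric Picard number to $1$ without destroying the points. It suffices to treat $F=\mathbb{C}(\mathbb{P}^{1})$: an arbitrary $\mathbb{C}(B)$ is a finite extension of $\mathbb{C}(\mathbb{P}^{1})$, and genus, geometric Picard number, non-isotriviality and Zariski density of the set of $F$-points all persist under such a base change (the geometric Picard number because $\overline{\mathbb{C}(B)}=\overline{\mathbb{C}(\mathbb{P}^{1})}$).

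Fix $g$ with $2\leqslant g\leqslant 10$ and let $V\subset\mathbb{P}^{g+1}$ be a smooth prime Fano threefold of genus $g$ (these exist precisely for $g\in\{2,\dots,10,12\}$, and $\mathrm{Pic}(V)=\mathbb{Z}\cdot(-K_{V})$). Choose an anticanonical pencil with smooth base curve $Z_{0}$, lying in the Noether--Lefschetz locus whose members are elliptic $\mathrm{K3}$ surfaces of Picard number exactly $3$ with a monodromy-invariant elliptic pencil and section; put $\mathcal{Y}:=\mathrm{Bl}_{Z_{0}}V$ with $\pi\colon\mathcal{Y}\to\mathbb{P}^{1}$ the resolved pencil, so that $K_{\mathcal{Y}}=-\pi^{*}\mathcal{O}_{\mathbb{P}^{1}}(1)$, the general fibre $Y_{u}$ of $\pi$ is a smooth elliptic $\mathrm{K3}$ of Picard number $3$ polarized by $H_{u}=-K_{V}|_{Y_{u}}$ with $H_{u}^{2}=2g-2$, and the elliptic pencils on the $Y_{u}$ glue to a morphism $\psi\colon\mathcal{Y}\to\Sigma$ over a ruled surface $r\colon\Sigma\to\mathbb{P}^{1}$, with zero section $O$ and — by Shioda--Tate, since $\rho(Y_{u})=3$ — a non-torsion section $\tau$; moreover $\pi$ is non-isotrivial (the $Y_{u}$ vary in moduli). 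Now the sections $n\tau$, $n\in\mathbb{Z}$, of $\psi$ have pairwise distinct images, prime divisors $\Sigma^{(n)}\subset\mathcal{Y}$ with $\bigcup_{n}\Sigma^{(n)}$ Zariski dense in $\mathcal{Y}$; composing $n\tau$ with a family $\{s_{\lambda}\}_{\lambda\in\Lambda}$ of sections of $r$ of dimension $\geqslant1$ sweeping out $\Sigma$ gives sections $(n\tau)\circ s_{\lambda}\colon\mathbb{P}^{1}\to\mathcal{Y}$ of $\pi$, i.e.\ $F$-points of the generic fibre $Y_{\eta}$, whose images sweep out $\bigcup_{n}\Sigma^{(n)}$. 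Hence $Y_{\eta}(F)$ is Zariski dense in $Y_{\eta}$, a $\mathrm{K3}$ surface over $F$ of genus $g$ but of geometric Picard number $\geqslant2$.

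Next, set $\mathcal{Y}=\mathcal{X}_{0}$ inside a family $\phi\colon\mathcal{X}\to T$ over a smooth curve $T$ of genus-$g$ $\mathrm{K3}$-fibrations over $\mathbb{P}^{1}$ (obtained by varying the pencil and $V$), chosen so that for general $t\in T$ the generic fibre of $\mathcal{X}_{t}\to\mathbb{P}^{1}$ has geometric Picard number $1$, i.e.\ so the fibrewise $\mathrm{K3}$ leaves the Noether--Lefschetz locus carrying the extra classes. One then shows the above families of sections deform. Fix $n$ and let $\mathcal{H}\subset\mathrm{Hilb}(\mathcal{X})$ be the component containing the curves $C=(n\tau)\circ s_{\lambda}(\mathbb{P}^{1})\subset\mathcal{Y}\subset\mathcal{X}$, with its morphism $\mathcal{H}\to T$; all members of $\mathcal{H}$ lie in fibres of $\phi$ and are sections of the corresponding $\pi_{t}$. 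From $0\to N_{C/\mathcal{Y}}\to N_{C/\mathcal{X}}\to N_{\mathcal{Y}/\mathcal{X}}|_{C}\to 0$ with $N_{\mathcal{Y}/\mathcal{X}}|_{C}\cong\mathcal{O}_{C}$ one gets $\chi(N_{C/\mathcal{X}})=\chi(N_{C/\mathcal{Y}})+1$, and $\chi(N_{C/\mathcal{Y}})=-K_{\mathcal{Y}}\cdot C=1$ by Riemann--Roch on $C\cong\mathbb{P}^{1}$; checking via the explicit geometry that $N_{C/\mathcal{Y}}\cong\mathcal{O}_{C}\oplus\mathcal{O}_{C}(-1)$ yields $H^{1}(N_{C/\mathcal{X}})=0$ and $\dim\Lambda=h^{0}(N_{C/\mathcal{Y}})=1$, so $\mathcal{H}$ is smooth of dimension $\chi(N_{C/\mathcal{X}})=2$ at $[C]$ while its fibre over $0$ has dimension $\dim\Lambda=1$. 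Hence $\mathcal{H}\to T$ is dominant, and for general $t$ the fibration $\mathcal{X}_{t}\to\mathbb{P}^{1}$ has sections in the class of $(n\tau)\circ s_{\lambda}$ for every $n$; keeping track that these deformed families stay distinct and sweep out divisors, the set of $F$-points of the generic fibre of $\mathcal{X}_{t}\to\mathbb{P}^{1}$ remains Zariski dense. For general $t$ this generic fibre is then a smooth non-isotrivial $\mathrm{K3}$ surface over $F$ of genus $g$, geometric Picard number $1$, with Zariski-dense set of $F$-points; letting $g$ run over $\{2,\dots,10\}$ finishes the argument.

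The crux is the deformation step, in two respects. First, one must construct $\phi\colon\mathcal{X}\to T$ with the prescribed special and general Picard data while carrying a genus-$g$ polarization along the way; this is a lattice-embedding and deformation problem whose feasibility is the delicate point, and it is precisely for $g$ in the range $2\leqslant g\leqslant 10$ that it can be carried through. Second, the normal-bundle count is tight: inside a single $\mathrm{K3}$ fibre the section $C$ specializes to a rigid $(-2)$-curve, so it is not at all clear that it survives the deformation — it survives only because inside the \emph{threefold} $\mathcal{Y}$ the curve $C$ genuinely moves, sweeping out $\Sigma^{(n)}$, and this mobility, engineered from the Mordell--Weil group together with the sections of the ruling on $\Sigma$, is exactly what makes $\chi(N_{C/\mathcal{X}})$ strictly exceed the dimension of the fibre of $\mathcal{H}\to T$ over $0$, and hence keeps the density alive after passing to Picard number $1$.
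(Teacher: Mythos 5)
The paper does not prove this statement at all: it is quoted verbatim from Hassett--Tschinkel (\cite[Theorem 1]{hassett-tschinkel}) and used as a black box, so there is no internal proof to compare your argument against. Judged on its own, your proposal is a strategy in the same general spirit as the actual Hassett--Tschinkel argument --- realize the K3 as the generic fibre of an anticanonical pencil on a prime Fano threefold of genus $g\in\{2,\dots,10\}$, identify $F$-points with sections of the fibration, and control them by deformation theory of sections with normal bundle $\mathcal{O}\oplus\mathcal{O}(-1)$ --- and the parts you do compute are correct: $-K_{\mathcal{Y}}\cdot C=1$ for a section, $\chi(N_{C/\mathcal{Y}})=1$, $\chi(N_{C/\mathcal{X}})=2$, and the ``Hilbert component of dimension $2$ dominating the one-dimensional base $T$'' conclusion is a standard and sound way to propagate sections off the special fibre. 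But your mechanism for producing \emph{infinitely many} families of sections (Mordell--Weil multiples on a degenerate elliptic member, then deforming twice) is not theirs, and it is exactly where the proposal has genuine gaps rather than a proof.

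Concretely: (a) you need an entire pencil of anticanonical divisors lying in a codimension-$2$ Noether--Lefschetz locus, with every member an elliptic K3 of Picard number exactly $3$ carrying a monodromy-invariant fibre class and zero section, and with Mordell--Weil rank $1$. A Noether--Lefschetz locus is not linear, so a line contained in it does not exist for free; and even granting rank $3$, Shioda--Tate gives $\mathrm{rk}\,MW=1$ only if the extra classes are not absorbed by reducible fibres --- none of this is argued. (The natural fix --- take the \emph{linear} system $|-K_V-E_0-\ell|$ of anticanonical divisors through a fixed elliptic curve $E_0$ and a fixed line $\ell$ with $\ell\cdot E_0=1$, and check it is at least a pencil --- is a real dimension count you have not done.) (b) The interpolating family $\phi:\mathcal{X}\to T$ from this highly special fibration to one whose generic fibre has geometric Picard number $1$, while preserving the genus-$g$ K3-fibration structure, is asserted; you yourself call it ``the delicate point,'' but flagging a gap is not closing it. (c) The splitting $N_{C/\mathcal{Y}}\cong\mathcal{O}_C\oplus\mathcal{O}_C(-1)$ is not automatic from $\deg N_{C/\mathcal{Y}}=-1$: it requires the sections $s_{\lambda}$ of the ruled surface $\Sigma\to\mathbb{P}^1$ to be taken in a class of self-intersection $0$ (otherwise $h^1(N_{C/\mathcal{Y}})$ need not vanish and the smoothness of $\mathcal{H}$ fails), and this is not checked. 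As it stands the proposal is an outline with the right shape but with its main existence inputs unestablished; since the theorem is anyway an external citation in this paper, the correct course is simply to cite \cite{hassett-tschinkel} rather than to reprove it.
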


Though our arguments are different from those in paper
\cite{hassett-tschinkel}, -- e.g. the proof of
Proposition~\ref{theorem:many-seFctions} is more or less a direct
parameter count, -- the idea of constructing the example we need
over a function field first was of great value for us.

More specifically, working with the Grassmannian $G(3, 7)$ over
the function field $F := \fie(\mathbb{P}^{1})$, we construct a
smooth projective $\mathrm{K3}$ surface $\mathcal{S}$ over $F$
(see the end of {\ref{subsection:pr-2}}) such that its general
\emph{constant} hyperplane section $\mathcal{C}$ over $F$ contains
infinitely many $\overline{\mathbb{Q}}(\mathbb{P}^{1})$-points
(see Corollary~\ref{theorem:many-sections-cor}). Moreover,
applying the functional version of the Mordell Conjecture (see
Theorem~\ref{theorem:manin-mordell-weil}), we obtain that every
such point on $\mathcal{S}$ is defined over $F$, once its
specialization is defined over $\fie$ (see
Lemmas~\ref{theorem:all-sections-are-over-k}). This together with
the previously stated density property for $S(\fie)$ implies that
$\mathcal{S}$ contains a dense set of $F$-points (cf.
Remark~\ref{remark:every-thing-in-U}). In addition, the geometric
Picard number of $\mathcal{S}$ equals $1$ (see
Lemma~\ref{theorem:pic-calc}), while the genus of $\mathcal{S}$ is
$12$ (compare with
Theorem~\ref{theorem:hassett-tschinkel-density-over-k-t} above).
Finally, playing with $\mathcal{S}$ and its specialization (at
infinity), we arrive at the surface as in
Theorem~\ref{theorem:main} (see Lemmas~\ref{theorem:1-to-1-lemma}
and \ref{theorem:1-to-2-lemma}).

\bigskip

\thanks{{\bf Acknowledgments.}
The text had gone a long way of revisions and discussion. I am
grateful to many people, especially to F. Bogomolov, C. Liedtke,
A. Lopez, B. Poonen, E. Sernesi, S. Tanimoto and various anonymous
referees, who have helped me with improving on the exposition.
Main parts of the paper were written during my stays at the
\emph{Courant Institute}, New York (US), \emph{Lorentz Center},
Leiden (Netherlands) and \emph{I.H.E.S.}, Bures-sur-Yvette
(France). I am grateful to these institutions for hospitality and
for providing excellent working conditions. The work was partially
supported by the \emph{Project TROPGEO of the European Researh
Council}}, by the \emph{World Premier International Research
Initiative (WPI), MEXT, Japan}, and by the \emph{Grant-in-Aid for
Scientific Research (26887009) from Japan Mathematical Society
(Kakenhi)}.

\bigskip

\section{Preliminaries}
\label{section:preliminaries}

\refstepcounter{equation}
\subsection{}

Let us formulate several auxiliary notions and results which we
will use in the proof of Theorem~\ref{theorem:main}. We start with
Geometry first (over the ground field $\mathbb{C}$):

\begin{definition}[cf. {\cite[Definition 3.8]{mukai}}]
\label{definition:bn-general} Let $\frak{S}$ be a smooth
projective $\mathrm{K3}$ surface and $L$ a primitive polarization
on $\frak{S}$. By this we mean: $L = \mathcal{O}_{\frak{S}}(1)$
for some projective embedding $\frak{S} \subset \mathbb{P}^{g}$,
where $g := (L^2)/2 + 1$ is the \emph{genus} of $\frak{S}$, and
$L$ is primitive as a vector in the lattice $H^{2}(\frak{S},
\mathbb{Z})$. Then the polarized $\mathrm{K3}$ surface
$(\frak{S},L)$ is called \emph{BN general}, if
$h^{0}(\frak{S},L_{1})h^{0}(\frak{S},L_{2}) < g + 1$ for all
non-trivial line bundles $L_{1},L_{2} \in \mathrm{Pic}\,\frak{S}$
such that $L = L_1 + L_2$.
\end{definition}

\begin{example}
\label{example:ex-of-bn-gen} Let $\mathcal{K}_g$ be the moduli
space of primitively polarized $\mathrm{K3}$ surfaces of genus
$g$. Then generic $\mathrm{K3}$ surface in $\mathcal{K}_g$ is BN
general. Note also that BN general $\mathrm{K3}$ surfaces form a
Zariski open subset in $\mathcal{K}_g$.
\end{example}

\begin{definition}
\label{definition:generated-by-global-sections} Let $W$ be a
smooth projective variety and $E$ a vector bundle on $W$. Recall
that $E$ is said to be \emph{generated by global sections} when
the natural homomorphism of $\mathcal{O}_W$-modules
$ev_{\scriptscriptstyle E} : H^{0}(W,E) \otimes \mathcal{O}_W
\longrightarrow E$ is surjective (we identify $E$ with its sheaf
of sections).
\end{definition}

\begin{remark}[cf. {\cite[Section 2]{mukai-1}}]
\label{remark:when-e-gives-a-morphism} In the notation of
Definition~\ref{definition:generated-by-global-sections}, if $E$
is generated by global sections, then one arrives at a natural
morphism $\Phi_{\scriptscriptstyle E} : W \longrightarrow G(r,N)$;
here $r := \mathrm{rank}\,E$, $N := h^{0}(W,E)$ and $G(r,N)$ is
the Grassmannian of $r$-dimensional linear subspaces in
$\mathbb{C}^N$. The morphism $\Phi_{\scriptscriptstyle E}$ sends
each $x \in W$ to the subspace $E_{x}^{\vee}\subset
H^{0}(W,E)^{\vee}$ dual to the fiber $E_x \subset E$. In
particular, we have an equality $E = \Phi_{\scriptscriptstyle
E}^{*}\mathcal{E}_r$ for the universal quotient vector bundle
$\mathcal{E}_r$ on $G(r,N)$, so that $H^{0}(W,E) =
H^{0}(G(r,N),\mathcal{E}_r)$. Furthermore, if the natural
homomorphism $\bigwedge^{r}H^{0}(W,E) \longrightarrow
H^{0}(W,\bigwedge^{r}E)$ (induced by the $r$-th exterior power of
$ev_{\scriptscriptstyle E}$) is surjective, then
$\Phi_{\scriptscriptstyle E}$ coincides with the embedding
$\Phi_{\scriptscriptstyle|c_1(E)|} : W \hookrightarrow \mathbb{P}
:= \mathbb{P}(H^{0}(W,L)^{\vee})$, given by the linear system
$\left|c_1(E)\right|$. More precisely, the diagram
$$
\begin{array}{rcl}
\Phi_{\scriptscriptstyle E} : W&\longrightarrow G(r,N)\cap\mathbb{P}&\subset G(r,N)\\
\cap&&\qquad\cap\\
\mathbb{P}&\hookrightarrow&
\mathbb{P}(\bigwedge^{r}H^{0}(W,E)^{\vee})
\end{array}
$$
is commutative, where $G(r,N)$ is considered with respect to its
Pl\"ucker embedding into
$\mathbb{P}(\bigwedge^{r}H^{0}(W,E)^{\vee})$.
\end{remark}

\begin{theorem-definition}
\label{theorem:existence-if-rigid-v-b} Let $(\frak{S},L)$ be a BN
general polarized $\mathrm{K3}$ surface of genus $g$. Then for
every pair of integers $(r,s)$, with $g = rs$, there exists a
(\emph{Gieseker}) stable vector bundle $E_r$ on $\frak{S}$ of rank
$r$ and such that the following holds:

\begin{enumerate}

\item\label{1-it} $c_1(E_r) = L$;

\smallskip

\item\label{2-it} $H^{i}(\frak{S},E_{r}) = 0$ for all $i > 0$ and $h^{0}(\frak{S},E_{r}) = r + s$;

\smallskip

\item\label{3-it} $E_r$ is generated by global sections and the natural homomorphism $\bigwedge^{r}H^{0}(\frak{S},E_{r})
\longrightarrow H^{0}(\frak{S},\bigwedge^{r}E_{r}) =
H^{0}(\frak{S},L)$ is surjective;
\smallskip

\item\label{4-it} any stable vector bundle on $\frak{S}$, which
satisfies $(\ref{1-it})$ and $(\ref{2-it})$, is isomorphic to
$E_r$.

\end{enumerate}

$E_r$ is called the \emph{rigid} vector bundle.
\end{theorem-definition}

\begin{proof}[Sketch of the proof]
Statements \eqref{1-it} -- \eqref{4-it} follow essentially from
results and discussion in \cite{mukai-1}, \cite{mukai},
\cite{mukai-7} and \cite{mukai-8} (cf. \cite[Theorem 3]{mukai-1}
and \cite[Section 4]{mukai}). Namely, there always exists a
\emph{semi-stable} rigid \emph{coherent sheaf} $E_r$, with
$\chi(\frak{S},E_r) = r + s$ and $c_1(E_r) = L$, as one sees from
e.g. \cite[Corollary 0.2]{mukai-8} and the assumptions on $g,r,s$.
Then BN generality of $\frak{S}$ is applied to see that $E_r$ is
stable. Indeed, otherwise there exists a subsheaf $E'
\varsubsetneq E_r$ of rank $r' \le r$, satisfying
$$\frac{h^0(\frak{S},c_1(E'))}{r'} = \frac{h^0(\frak{S},L)}{r} =
\frac{rs + 1}{r} = s + \frac{1}{r};$$ the latter easily gives $r =
r'$ and contradiction with $h^0(\frak{S},c_1(E')) < g + 1$ (cf.
Definition~\ref{definition:bn-general}).

Uniqueness (a.k.a. \emph{simplicity}) of $E_r$ follows from
\cite[Corollary 3.5]{mukai-7}. In turn, $E_r$ is locally free due
to \cite[Theorem 5.1 and Proposition 3.3]{mukai-7}, which gives
the statement \eqref{4-it}. Further, we have
$$H^2(\frak{S},E_r) = H^0(\frak{S},E_r^{\vee}) = 0$$ by stability (and Serre duality),
for otherwise there would be a non-trivial morphism of sheaves
$E_r \longrightarrow \mathcal{O}_{\frak{S}}$. From this and the
estimate
$$h^0(\frak{S},E_r) + h^0(\frak{S},E_r^{\vee}) \ge
\chi(\frak{S},E_r) = r + s$$ one gets \eqref{2-it}. Let us finally
establish \eqref{3-it}.

Put $E := E_r,W := \frak{S}$ in the notation of
Remark~\ref{remark:when-e-gives-a-morphism} and assume that
$\mathrm{Pic}\,\frak{S} = \mathbb{Z} \cdot L$, for the BN general
case will follow from Definition~\ref{definition:bn-general} and a
routine argument involving the Hilbert scheme of $\frak{S}$.

Fix some basis $s_1,\ldots,s_{r + s}$ of $H^0(\frak{S},E_r)$ (we
do not assume right now $E_r$ to be generated by global sections).
Note that various products $s_{i_1} \wedge s_{i_2}
\wedge\ldots\wedge s_{i_r}$, $i_1 < i_2 < \ldots < i_r$, naturally
define global sections of the line bundle $c_1(E_r) = L$. Suppose
that there is a codimension $1$ locus in $\frak{S}$ on which all
these sections vanish. Then, since $\mathrm{Pic}\,\frak{S} =
\mathbb{Z} \cdot L$, the morphism $\Phi_{\scriptscriptstyle E_r}$
must be constant by construction on an open subset
$\subseteq\frak{S}$, which is impossible. Hence the $r$-th wedge
products of different $s_i$ do not vanish simultaneously along any
curve on $\frak{S}$.

Thus, in particular, the sections $s_i$ generate a subsheaf
$\mathcal{F} \subseteq E_r$, having $c_1(\mathcal{F}) = c_1(E_r)$.
This yields $\mathcal{F} = E_r$ (because of stability) and shows
that $E_r$ is generated by global sections.

Now, if $C \sim L$ is a general curve on $\frak{S} =
\Phi_{\scriptscriptstyle L}(\frak{S})$ and $x \in C$ is a fixed
point, then the morphism $\Phi_{\scriptscriptstyle E_r}$ is not
smooth at $x$ iff the differentials of $s_{i_1} \wedge s_{i_2}
\wedge\ldots\wedge s_{i_r}$ vanish at $x$. This implies that the
set of all possible degenerate points of $\Phi_{\scriptscriptstyle
E_r}\big\vert_C$ coincides with the zero locus of some section
from $H^0(C,L^{\vee}\otimes K_C)$. But $L^{\vee}\otimes K_C = -L +
K_C = \mathcal{O}_C$. Hence $\Phi_{\scriptscriptstyle
E_r}\big\vert_C$ is everywhere smooth on $C$.

Drawing $C$ through every point $x \in \frak{S}$ we obtain that
the morphism $\Phi_{\scriptscriptstyle E_r}$ is smooth as well.
Since $\frak{S}$ is simply-connected, this is only possible when
$\Phi_{\scriptscriptstyle E_r}$ is an embedding, and so one gets
$E_r = \Phi_{\scriptscriptstyle E_r}^*\mathcal{E}_r$. Surjectivity
of $\bigwedge^{r}H^{0}(\frak{S},E_{r}) \longrightarrow
H^{0}(\frak{S},\bigwedge^{r}E_{r})$ is now clear and \eqref{3-it}
follows.
\end{proof}

\refstepcounter{equation}
\subsection{}
\label{subsection:pre-1}

Consider the Fano threefold $X$ with canonical Gorenstein
singularities and degree $(-K_{X})^3 = 70$ (see \cite{isk-prok},
\cite{karz-2}, \cite{karz-1}). Recall that the divisor $-K_{X}$ is
very ample and the linear system $|-K_{X}|$ provides an embedding
of $X$ into $\mathbb{P}^{37}$. Furthermore, we have
$$
\mathrm{Pic}\,X = \mathbb{Z}\cdot K_X \qquad \mbox{and} \qquad
\mathrm{Cl}\,X = \mathbb{Z}\cdot K_X \oplus \mathbb{Z}\cdot
\hat{E},
$$
where $\hat{E}$ is the quadratic cone on $X \subset
\mathbb{P}^{37}$ (see \cite[Corollary 3.11]{karz-3}). The
following result was proved in \cite{karz-3}:

\begin{proposition}[see {\cite[Corollary 1.5]{karz-3}}]
\label{theorem:r-polarized} The general element $S \in |-K_{X}|$
is a $\mathrm{K3}$ surface such that the lattice $\mathrm{Pic}\,S$
is generated by the very ample divisor $H \sim -K_{X}\big\vert_S$
and the $(-2)$-curve $C_0 := \hat{E}\big\vert_S$. One also has
$(H^2) = 70$ and $H \cdot C_0 = 2$.
\end{proposition}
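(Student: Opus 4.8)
The plan is to transfer the lattice-theoretic and positivity information from the ambient Fano threefold $X$ to a general anticanonical member $S$, using Bertini-type arguments together with the adjunction formula. First I would check that a general $S \in |-K_X|$ is a smooth K3 surface: since $-K_X$ is very ample on $X$ (it embeds $X$ into $\mathbb{P}^{37}$), Bertini gives that a general $S$ is smooth away from $\mathrm{Sing}(X)$, and one must argue in addition that $S$ avoids the (finitely many, or at worst one-dimensional) canonical Gorenstein singular points of $X$, or that passing through them still leaves $S$ smooth; here I would invoke the explicit description of $X$ in \cite{karz-1}, \cite{karz-2}, \cite{karz-3} to see that the base locus of $|-K_X|$ is empty and that a general member misses $\mathrm{Sing}(X)$. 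Then adjunction $K_S = (K_X + S)\big\vert_S = \mathcal{O}_S$ together with $h^1(X, \mathcal{O}_X) = 0$ (and the long exact sequence for $0 \to \mathcal{O}_X(K_X) \to \mathcal{O}_X \to \mathcal{O}_S \to 0$) gives $h^1(S, \mathcal{O}_S) = 0$, so $S$ is a genuine K3 surface. Setting $H := -K_X\big\vert_S$, very ampleness of $H$ is inherited from that of $-K_X$, and $(H^2) = (-K_X)^3 = 70$ by construction, giving genus $36$.

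Next I would identify the restriction of the non-Cartier class $\hat{E}$. Since $\hat{E}$ is a quadratic cone (a Weil divisor) on $X$ and $S$ is a general anticanonical member, $C_0 := \hat{E}\big\vert_S$ is a well-defined effective curve on $S$; I would compute its numerical invariants by intersecting inside $X$: $H \cdot C_0 = (-K_X) \cdot \hat{E} \cdot (-K_X) = (-K_X)^2 \cdot \hat{E}$, which one reads off as $2$ from the cone structure (a quadric cone has degree $2$ in the relevant projective embedding), and then $(C_0^2)$ is forced to be $-2$ by adjunction on the K3 surface $S$, since $C_0$ is (for general $S$) a smooth rational curve — the cone $\hat{E}$ meets a general hyperplane-type section in a conic, hence $C_0 \cong \mathbb{P}^1$. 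So $C_0$ is a $(-2)$-curve with $H \cdot C_0 = 2$.

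The main obstacle is the statement that $\mathrm{Pic}(S)$ is \emph{generated} by $H$ and $C_0$, i.e.\ that the geometric Picard number is exactly $2$ and that these two classes span the full lattice (not merely a finite-index sublattice). For the rank: one inclusion, $\mathrm{Pic}(X)\big\vert_S \hookrightarrow \mathrm{Pic}(S)$, is clear; the reverse bound $\rho(S) \leqslant 2$ for \emph{general} $S$ is the delicate point and is exactly what \cite[Corollary 1.5]{karz-3} supplies — I would cite it, the mechanism being a Noether–Lefschetz-type argument: the family $|-K_X|$ is large enough that a very general member picks up no extra algebraic classes beyond the restrictions of $\mathrm{Cl}(X) = \mathbb{Z}K_X \oplus \mathbb{Z}\hat{E}$, while the $(-2)$-curve $C_0$ persists because it comes from a fixed Weil divisor on $X$. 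For primitivity/saturation: I would verify that the lattice $\begin{pmatrix} 70 & 2 \\ 2 & -2\end{pmatrix}$ generated by $H, C_0$ has discriminant $-144$ and check, using the classification of even lattices of rank $2$ and discriminant $-144$ together with the fact that $H$ is a primitive polarization (as built into the construction, cf.\ \cite{karz-3}) and $C_0$ is an irreducible curve, that no primitive overlattice can occur — any strictly larger lattice would force an effective class of self-intersection $0$ or the non-primitivity of $H$, contradicting the set-up. With rank, generation, and the intersection numbers $(H^2) = 70$, $H\cdot C_0 = 2$, $(C_0^2) = -2$ all in hand, the proposition follows; the only genuinely non-formal input, the Picard-number-$2$ statement, is borrowed wholesale from \cite{karz-3}.
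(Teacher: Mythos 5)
The paper gives no proof of this proposition: it is imported verbatim from \cite[Corollary 1.5]{karz-3}, as the bracketed citation in the statement itself indicates. Your sketch of the routine parts (Bertini plus adjunction to get a K3 surface, $(H^2)=(-K_X)^3=70$, $H\cdot C_0=(-K_X)^2\cdot\hat{E}=2$ from the degree of the quadric cone, and $(C_0^2)=-2$ since a general hyperplane section of $\hat{E}$ is a smooth conic) is sound, and you correctly isolate the one genuinely non-formal step --- that $H$ and $C_0$ generate the full Picard lattice of a general $S$, not merely a finite-index sublattice --- as the content that must be borrowed from \cite{karz-3}; so your proposal is consistent with the paper, which simply defers the entire statement to that reference.
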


Let $S$ be the $\mathrm{K3}$ surface as in
Proposition~\ref{theorem:r-polarized}.

\begin{lemma}
\label{theorem:polarizations} The divisor $H - 4C_0$ is ample.
\end{lemma}

\begin{proof}
Suppose $Z\subset S$ is an irreducible curve such that $(H -
4C_{0}) \cdot Z \le 0$. Then we have $Z \ne C_{0}$.\footnote{In
the following argument, we are crucially relying on the fact that
$S \in |-K_X|$, since it ensures for instance both classes $H$ and
$C_0$ are \emph{effective}; this is not at all clear without the
presence of $X \supset S$ as above.} Write
$$
Z = aH + bC_{0}
$$
in $\mathrm{Pic}\,S$ for some $a,b \in \mathbb{Z}$. Note that $a
> 0$ because the linear system $|m(H + C_{0})|$ is basepoint-free for
$m \gg 1$ (it provides a contraction of the $(-2)$-curve $C_{0}$)
and $(H + C_{0}) \cdot Z = 72a$. On the other hand, we have
$$
0 \ge (H - 4C_{0}) \cdot Z = 62a + 10b,
$$
which implies that $b < -6a$. But then we get
$$
(Z^{2}) = 70a^2 + 4ab - 2b^2 \le -26a^2 < -2,
$$
a contradiction. Thus we obtain $(H - 4C_{0}) \cdot Z
> 0$ for every curve $Z \subset S$. Then $H - 4C_{0}$ is ample by
the Nakai-Moishezon criterion (note that $(H - 4C_{0})^2 = 22$).
\end{proof}

\begin{lemma}
\label{theorem:bn-general} The polarized $\mathrm{K3}$ surface
$(S,H - 4C_{0})$ (of genus $12$) is BN general.
\end{lemma}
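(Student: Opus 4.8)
The plan is to verify the defining inequality of Definition~\ref{definition:bn-general} directly, exploiting the rank-two lattice $\mathrm{Pic}(S) = \mathbb{Z}H \oplus \mathbb{Z}C_{0}$ with $(H^{2}) = 70$, $H\cdot C_{0} = 2$, $(C_{0}^{2}) = -2$. Since $H - 4C_{0}$ has genus $12$, one must show $h^{0}(S, L_{1})\,h^{0}(S, L_{2}) \leqslant 12$ for every decomposition $H - 4C_{0} = L_{1} + L_{2}$ with $L_{1}, L_{2}$ non-trivial. If one of the $L_{i}$ is non-effective the product vanishes, so I would assume both are effective and write $L_{i} = a_{i}H + b_{i}C_{0}$ with $a_{1} + a_{2} = 1$ and $b_{1} + b_{2} = -4$. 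The class $H + C_{0}$ is nef — it contracts $C_{0}$, as recorded in the proof of Lemma~\ref{theorem:polarizations} — and $L_{i}\cdot(H + C_{0}) = 72a_{i}$, so effectivity forces $a_{i} \geqslant 0$, whence $\{a_{1}, a_{2}\} = \{0, 1\}$. Say $a_{1} = 0$; then $L_{1} = b_{1}C_{0}$ with $b_{1} \geqslant 1$, so $h^{0}(S, L_{1}) = 1$ (induct down the exact sequences $0 \to \mathcal{O}_{S}((b-1)C_{0}) \to \mathcal{O}_{S}(bC_{0}) \to \mathcal{O}_{\mathbb{P}^{1}}(-2b) \to 0$), while $L_{2} = H - jC_{0}$ with $j := 4 + b_{1} \geqslant 5$.

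Thus it suffices to prove $h^{0}(S, H - jC_{0}) \leqslant 2$ for every $j \geqslant 5$. The key input is that $C_{0}$ is the \emph{only} $(-2)$-curve on $S$: for a class $aH + bC_{0}$ the equation $(aH + bC_{0})^{2} = -2$ rewrites as $(b - a)^{2} - (6a)^{2} = 1$, forcing $a = 0$ and $b = \pm 1$. Hence every irreducible curve other than $C_{0}$ has non-negative self-intersection, so any effective divisor with no $C_{0}$-component has non-negative self-intersection. Now if $H - jC_{0}$ is effective, write it as $mC_{0} + R$ with $m \geqslant 0$ and $R$ effective without $C_{0}$-component; then $R = H - (j+m)C_{0}$ and $0 \leqslant (R^{2}) = 70 - 4(j+m) - 2(j+m)^{2}$, i.e. $j + m \leqslant 5$. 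Since $j \geqslant 5$ this gives $j = 5$ and $m = 0$; in particular $H - jC_{0}$ is not effective for $j \geqslant 6$. Finally, $H - 5C_{0}$ is effective (by Riemann--Roch, as $h^{2}(S, H - 5C_{0}) = h^{0}(S, 5C_{0} - H) = 0$ because $(5C_{0} - H)\cdot(H + C_{0}) = -72 < 0$), primitive, and nef (the only curve it could be negative against is the unique $(-2)$-curve $C_{0}$, but $(H - 5C_{0})\cdot C_{0} = 12 > 0$); since $(H - 5C_{0})^{2} = 0$, the system $|H - 5C_{0}|$ is a genus-one pencil and $h^{0}(S, H - 5C_{0}) = 2$.

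Putting these together, every non-trivial decomposition of $H - 4C_{0}$ satisfies $h^{0}(S, L_{1})\,h^{0}(S, L_{2}) \leqslant 1 \cdot 2 = 2 < 13 = g + 1$, which is precisely the assertion that $(S, H - 4C_{0})$ is BN general. The one genuinely non-formal step is the bound $h^{0}(S, H - jC_{0}) \leqslant 2$: Riemann--Roch alone controls only $h^{0} - h^{1}$, so one really needs the description of the negative curves (equivalently, of the nef and effective cones) of a $\mathrm{K3}$ surface of type $\mathrm{R}$ to exclude a large $h^{1}$; once that is in hand, the remainder is a short calculation in the Picard lattice.
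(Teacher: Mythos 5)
Your proof is correct, and its skeleton coincides with the paper's: decompose $H-4C_{0}=L_{1}+L_{2}$ in $\mathrm{Pic}(S)=\mathbb{Z}H\oplus\mathbb{Z}C_{0}$, use the nef class $H+C_{0}$ (with $L_{i}\cdot(H+C_{0})=72a_{i}$) to force $\{a_{1},a_{2}\}=\{0,1\}$, and observe that the factor proportional to $C_{0}$ contributes $h^{0}=1$. The one place where you diverge is the bound on the other factor. The paper disposes of it in one line: for $b_{1}\leqslant-5$ one has $h^{0}(S,H+b_{1}C_{0})<h^{0}(S,H-4C_{0})=13$, simply because subtracting the effective divisor $C_{0}$ (not in the base locus of the very ample $|H-4C_{0}|$) strictly decreases $h^{0}$; multiplied by $h^{0}(S,b_{2}C_{0})=1$ this already beats $g+1=13$. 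You instead prove the much sharper statement $h^{0}(S,H-jC_{0})\leqslant 2$ for $j\geqslant 5$, by classifying the $(-2)$-classes (the identity $(aH+bC_{0})^{2}=-2\Leftrightarrow(b-7a)(b+5a)=1$ forces $a=0$, $b=\pm1$, so $C_{0}$ is the unique $(-2)$-curve), decomposing effective classes into $mC_{0}$ plus a part of non-negative square, and identifying $|H-5C_{0}|$ as the elliptic pencil. All of these steps check out (and the elliptic pencil reappears in the paper in the proof of Proposition~\ref{theorem:my-potential-density}), so your argument is a valid, more self-contained but longer route; the cost is that you need the full description of the effective and nef cones, whereas the paper's monotonicity trick needs nothing beyond very ampleness of $H-4C_{0}$.
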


\begin{proof}
Suppose that
$$
H - 4C_0 = L_1 + L_2
$$
for some non-trivial $L_{1},L_{2} \in \mathrm{Pic}\,S$. We may
assume that both $h^{0}(S,L_{1}),h^{0}(S,L_{2}) > 0$. Write
$$
L_{i} = a_i H + b_i C_0
$$
in $\mathrm{Pic}\,S$ for some $a_{i},b_{i} \in \mathbb{Z}$. Note
that $a_i \ge 0$ (cf. the proof of
Lemma~\ref{theorem:polarizations}), and hence we get $a_1 = 1,a_2
= 0$, say. This implies that $b_2 \ne 0$. Now, if $b_2 < 0$, then
$h^{0}(S,L_{2}) = 0$ and we are done. Finally, if $b_2
> 0$, then $b_{1} \le -5$ and so
$$
h^{0}(S,L_{1})h^{0}(S,L_{2}) = h^{0}(S,H + b_{1}C_{0}) < h^{0}(S,H
- 4C_{0}) = 13,
$$
since $h^{0}(S,L_{2}) = h^{0}(S,b_{2}C_{0}) = 1$.
\end{proof}

Lemmas~\ref{theorem:polarizations}, \ref{theorem:bn-general} and
Theorem-definition~\ref{theorem:existence-if-rigid-v-b} imply that
there exists a rigid vector bundle $E_3$ on $S$ of rank $3$,
satisfying $c_1(E_3) = H - 4C_0$ and $h^{0}(S,E_{3}) = 7$. Then
from Remark~\ref{remark:when-e-gives-a-morphism} we get the
morphism $\Phi_{\scriptscriptstyle E_{3}} : S \longrightarrow
G(3,7)\cap \mathbb{P}^{12} \subset
\mathbb{P}(\bigwedge^{3}\mathbb{C}^{7})$ which coincides with the
embedding $\Phi_{\scriptscriptstyle|H - 4C_{0}|} : S
\hookrightarrow \mathbb{P}^{12}$. One also has $E_3 =
\Phi_{\scriptscriptstyle E_{3}}^{*}\mathcal{E}_{3}$ for the
universal quotient vector bundle $\mathcal{E}_{3}$ on $G(3,7)$.

We now recall an explicit description of the image
$\Phi_{\scriptscriptstyle E_{3}}(S)$. Namely, let $\Lambda$ be
some global section of $\bigwedge^{3}\mathcal{E}_{3}$, which one
treats as a skew-form $\in\bigwedge^{3}\mathbb{C}^{7}\simeq
H^{0}(G(3, 7), \bigwedge^{3}\mathcal{E}_{3})$ (note also that
$\Lambda$ corresponds to a hyperplane section of $G(3,7)$ in the
Pl\"ucker embedding). Similarly, consider some $\sigma_{1},
\sigma_{2}, \sigma_{3} \in H^{0}(G(3, 7),
\bigwedge^{2}\mathcal{E}_{3}) \simeq \bigwedge^{2}\mathbb{C}^{7}$
and put $\Sigma_0 := (\sigma_{1}, \sigma_{2}, \sigma_{3}) \in
H^{0}(G(3, 7), (\bigwedge^{2}\mathcal{E}_{3})^{\oplus 3})$. Then
the following holds:

\begin{theorem}[see {\cite[Theorem 5.5]{mukai}}]
\label{theorem:mukai} The surface $(S,H - 4C_0) =
\Phi_{\scriptscriptstyle E_{3}}(S) \subset \mathbb{P}^{12}$
coincides with the locus
$$
G(3, 7) \cap (\Lambda = 0) \cap (\Sigma_0 = 0) \subset
\mathbb{P}(\bigwedge^{3}\mathbb{C}^{7}).
$$
\end{theorem}

\begin{remark}[cf. \cite{mukai-1}, \cite{mukai}, \cite{mukai-6}]
\label{remark:s-12-is-unirational} One repeats the previous
arguments literally in the case of any BN general polarized
$\mathrm{K3}$ surface $(S_{22},L_{22})$ of genus $12$. Namely,
$S_{22}$ can be embedded into $G(3,7) \cap \mathbb{P}^{12}$, where
it coincides with the locus $G(3, 7) \cap (\Lambda_{22} = 0) \cap
(\tau_1 = \tau_2 = \tau_3 = 0)$ for some $\Lambda_{22} \in
\bigwedge^{3}\mathbb{C}^7$ and $\tau_1, \tau_2, \tau_3 \in
\bigwedge^{2}\mathbb{C}^7$ (so that $\mathcal{O}_{S_{22}}(L_{22})
\simeq \mathcal{O}_{G(3,7)}(1)\big\vert_{S_{22}}$). Conversely,
any such locus defines, for generic $\Lambda_{22},\tau_i$, a BN
general polarized $\mathrm{K3}$ surface of genus $12$ (cf.
Example~\ref{example:ex-of-bn-gen}). One can also prove that
$(S_{22},L_{22})$ is uniquely determined by the
$\text{PGL}(7,\mathbb{C})$-orbits of $\Lambda_{22}$ and $\tau_i$.
This delivers a birational map between $\mathcal{K}_{12}$ and a
$\mathbb{P}^{13}$-bundle over the orbit space $\frak{M} :=
G(3,\bigwedge^{2}\mathbb{C}^{7})//\text{PGL}(7,\mathbb{C})$. In
particular, the triple $(\tau_1, \tau_2, \tau_3)$ (resp.
$\Lambda_{22}$) corresponds to a generic point in $\frak{M}$
(resp. in the fiber $\mathbb{P}^{13}$), while $\Sigma_0$
corresponds to a generic point in some codimension $1$ locus
$\frak{M}_0 \subset \frak{M}$ (see \cite[Corollary 1.5]{karz-3}).
\end{remark}

\refstepcounter{equation}
\subsection{}
\label{subsection:pre-223}

We now turn to Arithmetics. Fix some number field $\fie \subset
\overline{\mathbb{Q}}$ once and for all (although we will allow
replacements of $\fie$ with its finite degree extensions when
needed).

Recall that the threefold $X$ from {\ref{subsection:pre-1}} is
constructed as follows: one takes the weighted projective space
$\mathbb{P} := \mathbb{P}(1,1,4,6)$, embeds $\mathbb{P}$
anticanonically inside $\mathbb{P}^{38}$, picks \emph{any}
singular $\mathrm{cDV}$ point $O \in \mathbb{P}$ and projects
$\mathbb{P}$ from $O$; the image of $\mathbb{P}$ under this
projection is precisely our $X$ (see e.g. \cite[Example
3.20]{karz-2}). This construction can obviously be carried over
$\fie$.

The $\mathrm{K3}$ surface $S \in |-K_X|$ can also be defined over
$\fie$. More precisely, applying Lemma~\ref{theorem:bn-general} to
the general point of $|-K_X|$, from
Example~\ref{example:ex-of-bn-gen} we conclude that
generic\footnote{We will always mean that ``general'' = ``Zariski
general'' as long as the $\overline{\mathbb{Q}}$-varieties are
concerned.} surface $S\slash \fie$ is BN general. Now all the
previous constructions (of $E_r$, of the embedding inside
$\mathbb{P}^{12}$, etc.) run verbatim and we may assume that $S =
\Phi_{E_{3}}(S)$ (cf. Theorem~\ref{theorem:mukai}) is given over
$\fie$. Note however that we do not assert
$\mathrm{rank}\,\mathrm{Pic}\,S = 2$ here as in
Proposition~\ref{theorem:r-polarized} (the latter is actually
formulated for a \emph{very general} $S$ over $\mathbb{C}$).
Although one still has $\mathrm{Pic}\,S \ni H,C_0$ by
construction, where both $H$ and $C_0$ are over $\fie$ and
intersect as earlier.

Finally, the previous discussion applies to the surface $S_{22}$
from Remark~\ref{remark:s-12-is-unirational}, and we will identify
$S_{22}$ with a $\fie$-point varying over some Zariski open subset
$\subseteq\mathcal{K}_{12}$ (the choice of this point is yet to be
specified). Let us conclude this section with $2$ important
results:

\begin{proposition}
\label{theorem:my-potential-density} The set $S(\fie)$ of
$\fie$-points is Zariski dense in $S$. More precisely, after
possibly replacing $\fie$ with its finite extension, $S(\fie)$
contains a union $\displaystyle\bigcup_{i = 1}^{\infty}
E_{i}(\fie)$, where $E_i \subset S$ are algebraic curves such that
the sets $E_{i}(\fie)$ are infinite for all $i$ and
$\displaystyle\bigcup_{i = 1}^{\infty} E_{i}$ is Zariski dense in
$S$.
\end{proposition}

\begin{proof}
Note that $(H - 5C_{0})^2 = 0$. Then the surface $S$ caries the
structure of an elliptic fibration with general fiber $\sim H -
5C_{0}$ (see \cite[\S 3, Corollary
3]{piateckij-shapiro-shafarevich}). Now the result follows from
\cite[Section 4]{bogomolov-tschinkel}.
\end{proof}

\begin{theorem}[see \cite{manin}]
\label{theorem:manin-mordell-weil} Let $\frak{C}$ be a smooth
projective curve of genus $\ge 2$ over the function field
$\overline{\mathbb{Q}}(\mathbb{P}^{1})$. Suppose that the set
$\frak{C}(\overline{\mathbb{Q}}(\mathbb{P}^{1}))$ is infinite.
Then there exists a curve $\frak{C}_0$ over
$\overline{\mathbb{Q}}$ together with birational isomorphism over
$\overline{\mathbb{Q}}(\mathbb{P}^{1})$ between $\frak{C}$ and
$\frak{C}_0 \times \mathbb{P}^{1}$.
\end{theorem}

\bigskip

\section{Proof of Theorem~\ref{theorem:main}}
\label{section:the-proof-start}

We use the notation and conventions of
Section~\ref{section:preliminaries}. From now on all varieties are
assumed to be defined over $\overline{\mathbb{Q}}$ (unless stated
otherwise).

\refstepcounter{equation}
\subsection{}
\label{subsection:pr-2}

Let us begin with a setup. We are going to construct a special
pencil $\mathcal{S}$ of $\mathrm{K3}$ surfaces which will be used
later in our parameter count argument (see
Proposition~\ref{theorem:many-seFctions}). As a surprising
outcome, we will establish one intriguing property, valid for all
BN general $\mathrm{K3}$ surfaces of genus $12$ (see
Remark~\ref{remark:test-set-of-sections}).

Recall that equations of the surface $S \subset
G(3,7)\cap\mathbb{P}^{12}$ are represented by the data
$\Lambda,\Sigma_0 = (\sigma_1,\sigma_2,\sigma_3)$ (cf.
Theorem~\ref{theorem:mukai}), where $\Sigma_0\in \frak{M}_0$ is a
general $\fie$-point as in Remark~\ref{remark:s-12-is-unirational}
(cf. {\ref{subsection:pre-223}}). Similarly for $S_{22}$, one has
$\Sigma_{22} := (\tau_1,\tau_2,\tau_3)$, a general $\fie$-point in
$\frak{M}$. In addition, since the preimages of both $\frak{M}_0$
and $\frak{M}$ in $\mathcal{K}_{12}$ are projective bundles with
the typical fibers over $\Sigma_0$ and $\Sigma_{22}$ being (the
dual of) $\mathbb{P}^{13}\subset\mathbb{P}(H^{0}(G(3, 7),
\bigwedge^{3}\mathcal{E}_{3})) \ni \Lambda,\Lambda_{22}$, we may
assume that $\Lambda = \Lambda_{22} = H$ for some general
hyperplane $H \in H^0(G(3,7),\mathcal{O}_{G(3,7)}(1))$.

Note further that since $\frak{M}_0 \subset \frak{M}$ is of
codimension $1$, the skew-forms $\sigma_i$ also vary in
codimension $1$, which implies that the points
$(\tau,\sigma_2,\sigma_3),\tau\in H^{0}(G(3, 7),
\bigwedge^{2}\mathcal{E}_{3})$, are Zariski dense in $\frak{M}$.
In particular, one can take
\begin{equation}
\label{sigma-22-special} \Sigma_{22} = (\tau_{1}, \tau_{2},
\tau_{3}) := (\tau, \sigma_{2}, \sigma_{3}),
\end{equation}
where $\tau$ is generic.

Next we put $Z = \mathbb{P}^1$, with projective coordinates
$[t_0:t_1]$ and the function $t: = t_1/t_0$, and $G_Z := G(3,7)
\times Z$ together with the projection $\rho: G_Z \longrightarrow
Z$ on the second factor. One may identify (the general point of)
the scheme $G_Z \slash Z$ and the $\fie(t)$-variety $G(3,7)$. Let
us introduce the locus $\mathcal{V}_{22} := G_Z \cap (t_0\Sigma_0
+ t_1\Sigma_{22} = 0)$, for $\Sigma_{22}$ as in
\eqref{sigma-22-special}, so that
$$
\mathcal{V}_{22} \cap (\Lambda = 0) \cap (t = 0) = S \qquad
\mbox{and} \qquad \mathcal{V}_{22} \cap (\Lambda = 0) \cap (t =
\infty) = S_{22}
$$
by our setup. Consider also $\mathcal{S} := \mathcal{V}_{22} \cap
(\Lambda = 0)$ and the morphism $f := \rho\big\vert_{\mathcal{S}}
: \mathcal{S} \longrightarrow Z$ such that $f^{-1}(0) = S$ and
$f^{-1}(\infty) = S_{22}$. Again, with a slight abuse, we will
refer to (the general point of) $\mathcal{S} \slash Z$ as a
\emph{$\fie(t)$-surface}.

Finally, if $(\mathbb{P}^{12})^{\vee} \subset
\mathbb{P}(H^{0}(G(3, 7), \bigwedge^{3}\mathcal{E}_{3}))$ is the
space of all linear forms on $\mathbb{P}^{12} \supset S,S_{22}$,
then for a general $\lambda\in(\mathbb{P}^{12})^{\vee}(\fie)$ we
define $\mathcal{C} := \mathcal{S} \cap (\lambda = 0)$ (which
again will be sometimes referred to as a \emph{$\fie(t)$-curve}).
Note that $C := S \cdot \mathcal{C}$ is a general hyperplane
section$\slash\fie$ of $S$ and exactly the same constructions go
for $S_{22},\Sigma_{22}$, etc.

Fix an arbitrary general point $O \in
C(\overline{\mathbb{Q}})$.\footnote{This is only a technical
assumption. After all $O$ can be chosen arbitrary.} We want to
find a point $\frak{O} \in \mathcal{C}(\overline{\mathbb{Q}}(t))$
such that $O$ is the specialization of $\frak{O}$ at $t = 0$ (i.e.
$O = S \cdot \frak{O}$ on the surface $\mathcal{C} \longrightarrow
Z$). But before we proceed let us point out the next

\begin{lemma}
\label{theorem:pic-calc} The surface $S_{22}$ has geometric Picard
number $1$.\footnote{This is just a ``constructive'' reformulation
of the main result proved in \cite{ellenberg} (recall that this
result merely claims the existence of \emph{some} $\mathrm{K3}$
surface$\slash\overline{\mathbb{Q}}$ which has geometric Picard
number $1$). We provide this reformulation here because the
surface $S_{22}$ is the one we want to eventually construct (cf.
Theorem~\ref{theorem:main}).}
\end{lemma}

\begin{proof}
We follow the paper \cite{ellenberg}.

Let $\Omega\subset\mathbb{P}^{19}$ be the fundamental domain such
that $\mathcal{K}_{12} = \Omega/\Gamma$ for a subgroup $\Gamma$ in
the group of automorphisms of the lattice $H^{2}(S_{22},
\mathbb{Z})$. Let also $\mathcal{M}(\ell^{N})$ be a connected
component of the moduli space of all $\mathrm{K3}$ surfaces of
genus $12$ having both level $\ell^N$ and $p$ structures for some
$N \in \mathbb{N}$ and primes $p \ne \ell$. Namely, if
$\Gamma(p\ell^{N})\subset\Gamma$ is the corresponding congruence
subgroup, then $\mathcal{M}(\ell^{N}) :=
\Omega/\Gamma(p\ell^{N})$. Similarly, we define $\mathcal{M} :=
\Omega/\Gamma(p)$, a connected component of the moduli space of
all $\mathrm{K3}$ surfaces of genus $12$ with level $p$ structure.
Note that both $\mathcal{M}(\ell^{N})$ and $\mathcal{M}$ are
algebraic varieties over $\overline{\mathbb{Q}}$.

There is a Galois covering $\pi : \mathcal{M}(\ell^{N})
\longrightarrow \mathcal{M}$ with the Galois group
$\overline{\Gamma} := \Gamma(p)/\Gamma(p\ell^{N})$. Now the
arguments of \cite{ellenberg} imply that for every
$\overline{\mathbb{Q}}$-point $y$ from Zariski dense subset in
$\mathcal{M}$ the Galois group of $\pi^{-1}(y)$ over $y$ is also
$\overline{\Gamma}$. Then, again as in \cite{ellenberg}, the
$\mathrm{K3}$ surface $\frak{S}$ corresponding to $y$ has
geometric Picard number $1$. Moreover, since both
$\mathcal{K}_{12}$ and $\mathcal{M}$ are constructed as GIT
quotients of the Hilbert scheme of all $\mathrm{K3}$ surfaces of
genus $12$, we may assume that $y \in \mathcal{K}_{12}$ is a
general $\overline{\mathbb{Q}}$-point. We now put $S_{22} :=
\frak{S}$ to conclude the proof (recall that one is allowed to
extend the ground field $\fie$).
\end{proof}

\refstepcounter{equation}
\subsection{}
\label{subsection:pr-2as}

Let $U$ be the space of all $(3 \times 7)$-matrices of rank $3$
over $\overline{\mathbb{Q}}$ considered up to a scalar multiple.
The group $G := \text{PGL}(4,\overline{\mathbb{Q}})$ naturally
acts on $U$ and $G(3, 7) = U//G$ (cf. \cite[8.1]{mukai-moduli}).

In particular, any point in $G(3, 7)$ is the $G$-orbit of some
matrix $M \in U$, having the entries $m_{ij}$, $1 \le i \le 3,1
\le j \le 7$ (this also follows from the moduli interpretation of
$G(3,7)$). In this way, the equations $t_0\Sigma_0 +
t_1\Sigma_{22} = 0$, $\Lambda = 0$ and $\lambda = 0$ from
{\ref{subsection:pr-2}} lift to some $G$-invariant (homogeneous)
polynomial relations on $U$ between $m_{ij}$.

More generally, any section of the morphism $\rho: G_Z
\longrightarrow Z$, or equivalently, any point in $G(3,
7)(\overline{\mathbb{Q}}(t))$, yields a $1$-parameter algebraic
family of projective planes inside $\mathbb{P}^6$, which is the
same as a $\mathbb{P}^2$-bundle over $Z = \mathbb{P}^1$. This
bundle has a section, so that our family admits a lifting to $U$,
i.e. it is (generically) represented by some $M$ with $m_{ij} =
p_{ij}(t)$, where $p_{ij}(t)$ are polynomials in $t$.

Now, let us identify the above $\overline{\mathbb{Q}}$-point $O
\in C \subset G(3, 7)$ with a matrix in $U$, which can be assumed
to be of the form
$$
O = \begin{pmatrix} 1 & 0 & 0 & 0 & \ldots & 0\\
0 & 1 & 0 & 0 & \ldots & 0\\
0 & 0 & 1 & 0 & \ldots & 0
\end{pmatrix}
$$
after fixing a (projective) basis on $\mathbb{P}^6$. Choose also a
point $M \in G(3, 7)(\overline{\mathbb{Q}}(t))$ such that the
corresponding matrix has entries
\begin{equation}
\label{test-section} p_{ij}(t) := Y_{ij} + X_{ij}t
\end{equation}
for some $X_{ij}, Y_{ij} \in \overline{\mathbb{Q}}$ and all $1 \le
i \le 3,1 \le j \le 7$, where we additionally put $Y_{ij} := 0$
for all $j \ge 4$. Regard $X_{ij}, Y_{ij}$ as projective
coordinates on $\mathbb{P}^{29}$ (acted by $G$) and identify $M$
with the corresponding $\overline{\mathbb{Q}}(t)$-point in
$\mathbb{P}^{29}$. This $M$ is our candidate for the point
$\frak{O} \in \mathcal{C}(\overline{\mathbb{Q}}(t))$ mentioned at
the end of {\ref{subsection:pr-2}} (note that $M$ obviously
specializes to $O$ at $t = 0$):

\begin{proposition}
\label{theorem:many-seFctions} The set
$\mathcal{C}(\overline{\mathbb{Q}}(t))$ contains a point
$\frak{O}$ of the form \eqref{test-section}.
\end{proposition}

Before turning to the proof we introduce a bit more of notation.

\refstepcounter{equation}
\subsection{}
\label{subsection:pr-2asdfg}

Let $X$ (resp. $Y$) be the $(3 \times 7)$-matrix with entries
$X_{ij}$ (resp. $Y_{ij}$), $1 \le i \le 3,1 \le j \le 7$ (and
$Y_{ij} = 0$ for all $j \ge 4$). Let also $X_{j}$ (resp. $Y_j$),
$1 \le j \le 7$, be the $j$-th column of $X$ (resp. of $Y$). Then
we write
$$
\tau(X + Y) = \tau(Y) + \sum_{i = 1}^{3}\big(\sum_{j =
1}^{7}\beta^{(i)}_{j}X_{j}\big) \wedge Y_{i} + \tau(X)
$$
for some $\beta^{(i)}_{j} \in \fie$ and $\tau$ as in
\eqref{sigma-22-special}. Here the notation $\tau(X)$ means that
one treats $\tau$ as a skew-form on
$H^0(G(3,7),\mathcal{E}_3)^{\vee}$ of degree $2$ and evaluates it
on the pairs of columns of $X$ (same applies to $\tau(Y)$ of
course). This amplifies the lifting to $U$ of the equations on
$G_Z$ mentioned in {\ref{subsection:pr-2as}}.

Note that $X,Y$ are $3$-vectors with entries in $X_{ij},Y_{ij}$.
Denote by $Q_{1\alpha}(X, X)$ (resp. by $Q_{1\alpha}(Y, Y)$) the
$\alpha$-th component of the vector $\tau(X)$ (resp. of
$\tau(Y)$), $\alpha\in\{1,2,3\}$, and by $Q_{1\alpha}(X, Y)$ the
$\alpha$-th component of the vector $\displaystyle\sum_{i =
1}^{3}(\displaystyle\sum_{j = 1}^{7}\beta^{(i)}_{j}X_{j}) \wedge
Y_{i}$.

In the same way, one defines $Q_{(i + 1)\alpha}(X, X),Q_{(i +
1)\alpha}(Y, Y),Q_{(i + 1)\alpha}(X, Y)$ for all $\sigma_i$ from
{\ref{subsection:pr-2}}. Finally, we write
$$
\Lambda(X + Y) = \Lambda(Y) + \sum_{i, k = 1}^{3}\big(\sum_{j =
1}^{7}\beta^{(ik)}_{j1}X_{j}\big) \wedge Y_{i} \wedge Y_{k} +
\sum_{i = 1}^{3}\big(\sum_{j, k = 1}^{7}\beta^{(i)}_{jk1}X_{j}
\wedge X_{k}\big) \wedge Y_{i} + \Lambda(X)
$$
for some $\beta^{(i)}_{jk1}, \beta^{(ik)}_{j1} \in \fie$, and
similarly one introduces $\beta^{(i)}_{jk2}, \beta^{(ik)}_{j2}$
for $\lambda$. (Again, the notation $\Lambda(X),\lambda(X)$ stems
from interpreting both $\Lambda$ and $\lambda$ as skew-forms on
$H^0(G(3,7),\mathcal{E}_3)^{\vee}$ of degree $3$, applied to the
triples of columns $X_i$).

\begin{proof}[Proof of Proposition~\ref{theorem:many-seFctions}]
In the preceding notation, let us substitute $p_{ij}(t),1 \le i
\le 3$, $1 \le j \le 7$, from \eqref{test-section} into the
equation $\Sigma_0 + t\Sigma_{22} = 0$ and equate all the
coefficients of the resulting $9$ quadratic polynomials in $t$ to
zero. Then, since \eqref{sigma-22-special} takes place and $O \in
C$, we get a closed $G$-invariant subscheme in $\mathbb{P}^{29}$,
given by the equations
\begin{equation}
\label{eq-of-fr-f-red} \left\{
\begin{array}{l}
Q_{1\alpha}(X, X) = Q_{1\alpha}(X, Y) + Q_{2\alpha}(X, X) =
Q_{1\alpha}(Y, Y) + Q_{2\alpha}(X, Y) = 0,\\
\\
Q_{l\alpha}(X, X) = Q_{l\alpha}(X, Y) = 0,
\end{array}
\right.
\end{equation}
$\alpha\in\{1,2,3\},l \in \{3, 4\}$ (these are precisely the
conditions that $G \cdot M \in
\mathcal{V}_{22}(\overline{\mathbb{Q}}(t))$ for the locus
$\mathcal{V}_{22} \subset G_Z$ introduced in
{\ref{subsection:pr-2}}).

Further, after substituting $p_{ij}(t)$ from \eqref{test-section}
into the equations $\Lambda = \lambda = 0$ and equating all the
coefficients of the resulting $2$ cubic polynomials in $t$ to
zero, we arrive at a closed $G$-invariant subscheme in
$\mathbb{P}^{29}$, given by the equations
\begin{equation}
\label{eq-of-lam-big} \sum_{i, k = 1}^{3}\big(\sum_{j =
1}^{7}\beta^{(ik)}_{j\gamma}X_{j}\big) \wedge Y_{i} \wedge Y_{k} =
\sum_{i = 1}^{3}\big(\sum_{j, k =
1}^{7}\beta^{(i)}_{jk\gamma}X_{j} \wedge X_{k}\big) \wedge Y_{i} =
R_{\gamma}\big(X, X\big) = 0,
\end{equation}
$\gamma \in \{1, 2\}$, where $R_{\gamma}(X, X)$ is a linear
combination of $(3 \times 3)$-minors of the matrix $X$. Again,
both \eqref{eq-of-fr-f-red} and \eqref{eq-of-lam-big} yield $G
\cdot M \in \mathcal{C}(\overline{\mathbb{Q}}(t))$, so it remains
to satisfy these equations for one particular $X$.

Consider the rank $1$ matrix
\begin{equation}
\label{some-mat-x}
\begin{pmatrix} W_1l_1 & W_1l_2 & \ldots & W_1l_7\\
W_2l_1 & W_2l_2 & \ldots & W_2l_7\\
W_3l_1 & W_3l_2 & \ldots & W_3l_7
\end{pmatrix}
\end{equation}
for some $W_i,l_j \in \overline{\mathbb{Q}}$ and evaluate
\eqref{eq-of-fr-f-red}, \eqref{eq-of-lam-big} at $X :=$
\eqref{some-mat-x}, $Y := O$ (cf. the discussion in
{\ref{subsection:pr-2asdfg}}). Let $V$ be the linear space spanned
by $x_{1i} := W_1l_i,x_{2i} := W_{2}l_i,x_{3i} := W_{3}l_i,1 \le i
\le 7$. Then each of $Q_{l\alpha},l\in\{2,3,4\},\alpha \in
\{1,2,3\}$, turns into a linear form $H_{l\alpha}$ on the subspace
$V_{\alpha}\subset V$ given by equations
$$
x_{11} = \ldots = x_{33} = x_{i(\alpha)j} = 0,
$$
$4 \le j \le 7$, for some fixed $i(\alpha)$ with $\{i(1), i(2),
i(3)\} = \{1, 2 ,3\}$. In the same way, we get the forms
$H_{1\alpha} \in V$ and $H_{\Lambda}, H_{\lambda} \in
\displaystyle\sum_{\alpha} V_{\alpha}$, corresponding to
$Q_{1\alpha}$ and $\Lambda,\lambda$, respectively.

One may impose the following $2$ restrictions on
$H_{l\alpha},H_{\Lambda},H_{\lambda}$:

\begin{lemma}
\label{theorem:few-eqs-1} $Q_{12}(Y,Y) = Q_{13}(Y,Y) = 0$ (hence
$Q_{12}(Y,Y) \ne 0$) independently of $O \in S$.
\end{lemma}

\begin{proof}
Indeed, the section $X_1 \wedge X_2$ equals $(1,0,0)$ when
evaluated at $O$, and the same holds for all linear combinations
of $X_1 \wedge X_2$ and $\sigma_i$. We can take $\tau$ to be such
a combination (cf. the construction of $\Sigma_{22}$ in
\eqref{sigma-22-special}). Then $\tau = (1,0,0)$ on $S \cap (X_1
\wedge X_2 \wedge X_3 \ne 0)$ for the latter locus being lifted to
$U$ as usual (cf. {\ref{subsection:pr-2as}}).
\end{proof}

\begin{lemma}
\label{theorem:few-eqs-2} For a given general common zero $P$ of
linear forms $H_{31},H_{32},H_{4\alpha},\alpha\in\{1,2,3\}$,
varying together with $O \in S$, we have $H_{22}(P) = H_{23}(P) =
H_{33}(P) = H_{\Lambda}(P) = H_{\lambda}(P) = 0$ and $H_{21}(P)
\ne 0$.
\end{lemma}

\begin{proof}
Note that $X$ in \eqref{some-mat-x} can be replaced with any point
from the orbit $G^* \cdot X$ (for $\mathcal{S},\mathcal{C},O$
being fixed). Here $G^* := \text{PGL}(5,\overline{\mathbb{Q}})$
comes from the natural action on $U$. This induces an effective
$G^*$-action on the projectivization
$\mathbb{P}(\displaystyle\sum_{\alpha} V_{\alpha}) \ni P$. The
same holds also for $G = \text{PGL}(4,\overline{\mathbb{Q}})$ of
course.

In turn, the common zeroes of the forms
$H_{31},H_{32},H_{4\alpha},\alpha\in\{1,2,3\}$, constitute a
subspace $\mathbb{P}(V_0) \subset
\mathbb{P}(\displaystyle\sum_{\alpha} V_{\alpha})$ of dimension
$6$. Then the groups $G,G^*$ induce the
$\text{PGL}(7,\overline{\mathbb{Q}})$-action on $\mathbb{P}(V_0)$,
so that $\text{PGL}(7,\overline{\mathbb{Q}}) \cdot P =
\mathbb{P}(V_0)$.

Further, the common zero locus $\Xi$ of $H_{22},H_{23},H_{33},
H_{\Lambda}$ and $H_{\lambda}$ on $\mathbb{P}(V_0)$ is of
dimension $\ge 1$. Then, after acting by
$\text{PGL}(7,\overline{\mathbb{Q}})$, we can achieve $\Xi \ni P$
(recall again that in this case $X$ is just replaced by another
point from $G^* \cdot X$). Finally, since $\dim \Xi \ge 1$, the
form $H_{21}$ does not vanish at the general point of $\Xi$ (this
is easily seen by generality of $O,\sigma_i$, etc.), which we can
set to be $P$. This finishes the proof.
\end{proof}

By introducing an extra variable $W$, from \eqref{eq-of-fr-f-red},
\eqref{eq-of-lam-big}, \eqref{some-mat-x} and
Lemmas~\ref{theorem:few-eqs-1}, \ref{theorem:few-eqs-2} we obtain
a closed subscheme $\Gamma$ in $\mathbb{P}^7$ with projective
coordinates $l_i,W_j,W$, given by $9$ equations
\begin{equation}
\label{eq-of-gam} \left\{
\begin{array}{l}
W^{2} - H_1(W_1l_{4}, \ldots, W_{3}l_{7}) = H_2(W_1l_{4}, \ldots, W_{3}l_{7}) = \ldots = H_6(W_1l_{4}, \ldots, W_{3}l_{7}) = 0,\\
\\
F_{l}(W_1l_{1}, \ldots, W_{3}l_{7}) = 0,\\
\end{array}
\right.
\end{equation}
$l\in\{1,2,3\}$, for some linearly independent forms $H_{i}, F_{l}
\in H^{0}(\mathbb{P}^{20}, \mathcal{O}(1))$. (Here we have
employed the fact that $Q_{1\alpha}(Y,Y)\ne 0$ and that the
entries $W_il_j$ of \eqref{some-mat-x} are defined up to a scalar
multiple; this explains the $W^2$-term in \eqref{eq-of-gam}).

\begin{lemma}
\label{theorem:all-about-w} $W \ne 0$ identically on $\Gamma$.
\end{lemma}

\begin{proof}
The equations of \eqref{eq-of-gam} involving $H_2,\ldots,H_6$ are
equivalent to the following:
$$
P_i(W_1,W_2,W_3)l_i - Q_{i}(W_1,W_2,W_3)l_7 = P_1(W_1,W_2,W_3) =
P_2(W_1,W_2,W_3) = 0,
$$
$i\in\{4,5,6\}$, where $P_i,Q_i$ are some homogeneous forms. The
latter obviously have a common non-trivial solution in $W_i,l_j$,
hence so does $W^2 - H_1 = 0$. Then the remaining equations $F_l =
0$ in \eqref{eq-of-gam} become a system of $3$ equations
$L_i(W_1l_1,\ldots,W_3l_3) = c_i$ for some non-zero linear forms
$L_i$ and $c_i\in\overline{\mathbb{Q}}$. Again, the latter system
has a non-trivial solution in $W_i,l_j$, so that $W \ne 0$ on
$\Gamma$ as claimed.
\end{proof}

It follows from Lemma~\ref{theorem:all-about-w} that there exists
a matrix $X_0$ of the form \eqref{some-mat-x} such that the
corresponding point in $\mathbb{P}^{7}$ belongs to $\Gamma \cap (W
= 1)$. Then the pair $(X, Y) := (X_{0}, O)$ provides a common
solution to \eqref{eq-of-fr-f-red} and \eqref{eq-of-lam-big}. This
determines a point $\frak{O} \in G(3,
7)(\overline{\mathbb{Q}}(t))$ of the form \eqref{test-section} by
our previous discussion (cf. {\ref{subsection:pr-2as}}) and
because for $(X, Y) = (X_{0}, O)$ one gets a rank $3$ matrix in
\eqref{test-section} whenever $|t| \ll 1$. Thus we have $\frak{O}
\in \mathcal{C}(\overline{\mathbb{Q}}(t)(t))$ by construction
(with $O \in C$ being the specialization of $\frak{O}$ at $t =
0$).

Proposition~\ref{theorem:many-seFctions} is completely proved.
\end{proof}

\begin{corollary}
\label{theorem:many-sections-cor} The set
$\mathcal{C}(\overline{\mathbb{Q}}(t))$ is infinite.
\end{corollary}

\begin{proof}
Note that the proof of Proposition~\ref{theorem:many-seFctions}
(cf. Lemmas~\ref{theorem:few-eqs-1} and \ref{theorem:few-eqs-2})
does not depend on the choice of $O \in C(\overline{\mathbb{Q}})$
(notation is as earlier). Then we obtain
$\frak{O}\in\mathcal{C}(\overline{\mathbb{Q}}(t))$ for various $O$
and the claim follows.
\end{proof}

\begin{remark}
\label{remark:test-set-of-sections} Let us indicate that all
$\lambda$ with $\lambda(\frak{O}) = 0$ form a codimension $\le 2$
subspace $\frak{L}_{\frak{O}}\subset(\mathbb{P}^{12})^{\vee}$.
Indeed, the first (linear) constraint is $\lambda(O) = 0$ and the
second one is obtained from \eqref{eq-of-lam-big} after plugging
in $\gamma := 2,Y := O,X := X_0$.  In addition, all the preceding
arguments apply literally, if one replaces $O \in C = S \cap
(\lambda = 0)$ by $O \in S_{22} \cap (\lambda = 0)$. Note also
that Corollary~\ref{theorem:many-sections-cor} together with
Lemma~\ref{theorem:isomorphism-to-the-product} below imply all the
curves $S_{22} \cap (\lambda = 0)$ are isomorphic for \emph{all}
BN general $S_{22}\subset\mathbb{P}^{12}$ and any fixed general
$\lambda\in(\mathbb{P}^{12})^{\vee}$. This fact is not
particularly surprising from the Gromov-Witten theory view point.
Moreover, one could try to refine the existing curve counting
techniques for $\mathrm{K3}$ surfaces (see e.g.
\cite{maulik-et-all}) by observing that any generic $C \sim
L_{22}$, for example, does not change under the \emph{large
complex structure limit} of $S_{22}$ (cf. \cite{gross-wil}), since
the corresponding family of smooth curves is isotrivial according
to our reasoning. This equips, in some canonical way, any such $C
\subset S_{22}$ with certain \emph{modular structure} (by which we
simply mean a $3$-valent graph inscribed inside $\mathbb{P}^1 =$
the large limit of $C$, as discussed in \cite{thyu}, say), and so
the problem reduces to counting these discrete objects (compare
with \cite{fed-yu-monodromy}). Finally, it would be very
interesting to find out whether the same property of hyperplane
sections holds for other families of curves, or for BN general
polarized $\mathrm{K3}$ surfaces from other moduli spaces
$\mathcal{K}_g,g\ne 12$. This discussion, however, is beyond the
scope of the present paper.
\end{remark}

Further, since the (smooth projective) $\fie(t)$-curve
$\mathcal{C}$ is of genus $12$ (cf. the end of
{\ref{subsection:pr-2}}),
Corollary~\ref{theorem:many-sections-cor} and
Theorem~\ref{theorem:manin-mordell-weil} imply that $\mathcal{C}$,
when treated as a $\fie$-surface together with the morphism
$f\big\vert_{\mathcal{C}} : \mathcal{C} \longrightarrow Z$, is
birationally isomorphic over $Z$ to $\Gamma \times Z$ for some
$\overline{\mathbb{Q}}$-curve $\Gamma$. After a suitable
coordinate change on $Z$, we pick a Zariski open subset $U_Z
\subset Z$ such that $0, \infty \in U_Z$,
$f\big\vert_{\mathcal{C}}$ is smooth over $U_Z$ and
$(f\big\vert_{\mathcal{C}})^{-1}(t') \simeq \Gamma$ for all $t'
\in U_Z \setminus{\{0\}}$. Put also
$\mathcal{C}_{\scriptscriptstyle U_Z} := f^{-1}(U_Z) \cap
\mathcal{C}$.

\begin{lemma}
\label{theorem:isomorphism-to-the-product} The surfaces
$\mathcal{C}_{\scriptscriptstyle U_Z}$ and $C \times U_Z$ are
isomorphic over $U_Z$ (i.e. the family
$f\big\vert_{\mathcal{C}_{\scriptscriptstyle U_Z}} :
\mathcal{C}_{\scriptscriptstyle U_Z} \longrightarrow U_Z$ is
trivial).
\end{lemma}

\begin{proof}
Take a small disk $\Delta \subset U_Z$ around $0$ and consider the
period map $\zeta : \Delta \longrightarrow \frak{H}_{12}$,
associated with $f\big\vert_{\mathcal{C}}$, into the Ziegel
upper-half space $\frak{H}_{12}$. Here $\zeta$ is a holomorphic
morphism, constant on $\Delta\setminus{\{0\}}$, hence also on
$\Delta$, and so we get
$$
C = (f\big\vert_{\mathcal{C}})^{-1}(0) \simeq
(f\big\vert_{\mathcal{C}})^{-1}(t') \simeq \Gamma.
$$
Thus all fibers of $f\big\vert_{\mathcal{C}_{\scriptscriptstyle
U_Z}}$ are isomorphic to $C$. This implies that the family
$f\big\vert_{\mathcal{C}_{\scriptscriptstyle U_Z}} :
\mathcal{C}_{\scriptscriptstyle U_Z} \longrightarrow U_Z$ is
locally trivial in the analytic category. Moreover, since through
any general point on $C$ there passes a section of
$f\big\vert_{\mathcal{C}_{\scriptscriptstyle U_Z}}$ (cf.
Corollary~\ref{theorem:many-sections-cor}), the sheaf
$A_{\scriptscriptstyle U_Z}$ of relative automorphisms of
$f\big\vert_{\mathcal{C}_{\scriptscriptstyle U_Z}} :
\mathcal{C}_{\scriptscriptstyle U_Z} \longrightarrow U_Z$ is
constant. In particular, we have $H^{1}(U_Z, A_{\scriptscriptstyle
U_Z}) = 0$, which means that the family
$f\big\vert_{\mathcal{C}_{\scriptscriptstyle U_Z}} :
\mathcal{C}_{\scriptscriptstyle U_Z} \longrightarrow U_Z$ is
trivial.
\end{proof}

\begin{lemma}
\label{theorem:isomorphism-to-the-product-1} Through every point
on $\mathcal{C}_{\scriptscriptstyle U_Z} \subseteq \mathcal{C}$
there passes at most one section of $f\big\vert_{\mathcal{C}}$.
\end{lemma}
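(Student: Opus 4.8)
The statement to prove is Lemma~\ref{theorem:isomorphism-to-the-product-1}: through every point of $C_{U,\xi}$ there is \emph{at most one} section of $f\big\vert_{C_{\xi}}$. The plan is to argue by contradiction: suppose $O \in C_{U,\xi}$ lies on two distinct sections $O_{\xi}$ and $O'_{\xi}$ of $f\big\vert_{C_{\xi}}$. Both sections, being sections of $f\big\vert_{C_{\xi}} : C_{\xi}\longrightarrow Z$ with the same value $O$ at $t=0$, give points of $C_{\xi}(\bar K(t))$ specializing to $O$ at $t=0$. Using Lemma~\ref{theorem:isomorphism-to-the-product}, over $U$ the surface $C_{U,\xi}$ is isomorphic to the trivial family $C\times U$; under this trivialization a section of $f\big\vert_{C_{U,\xi}}$ is the graph of a constant map $U\to C$, so it is determined by its value at any single point. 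Hence the two sections through $O$ must coincide over $U$, and, being sections of the proper family $f\big\vert_{C_{\xi}}$ over the whole of $Z$, coincide everywhere.

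First I would make precise that $C_{U,\xi}\cong C\times U$ over $U$ identifies the morphism $f\big\vert_{C_{U,\xi}}$ with the second projection $C\times U\to U$. Under this identification, a section $\sigma : U\to C_{U,\xi}$ of $f\big\vert_{C_{U,\xi}}$ is the same as a morphism $U\to C$ composed with $(\mathrm{id},\sigma)$; since $C$ is a smooth curve of genus $12\geqslant 2$ and $U$ is rational (an open subset of $\mathbb{P}^1$), any morphism $U\to C$ is constant. Therefore a section of $f\big\vert_{C_{U,\xi}}$ is precisely a \textquotedblleft horizontal slice\textquotedblright\ $\{c\}\times U$ for some $c\in C$, and its value at the point $t=0$ recovers $c$. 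Two sections passing through the same $O\in C_{U,\xi}$ hence have $c=c'=O$ and agree over $U$.

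Next I would upgrade \textquotedblleft agree over $U$\textquotedblright\ to \textquotedblleft agree over $Z$\textquotedblright. The sections $O_\xi, O'_\xi$ of $f\big\vert_{C_\xi}$ are morphisms $Z\to C_\xi$; since $C_\xi$ is separated and $Z$ is an integral curve, two such morphisms agreeing on the dense open $U$ agree on all of $Z$. Combining this with the previous paragraph gives $O_\xi = O'_\xi$, contradicting the assumption that they were distinct. This proves the lemma.

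The main obstacle is entirely in the first step: one must know that $f\big\vert_{C_{U,\xi}}$ is the \emph{trivial} family $C\times U\to U$ (not merely isotrivial), and that under \emph{some} trivialization the point $O$ sits in a product slice. This is exactly the content of Lemma~\ref{theorem:isomorphism-to-the-product}, which I may assume. The only subtlety is to make sure the chosen trivialization is compatible with the structure of \emph{sections}, i.e., that a section of $f\big\vert_{C_{U,\xi}}$ genuinely becomes a constant section of $C\times U\to U$ — this is automatic because sections of the trivial family $C\times U\to U$ are in bijection with morphisms $U\to C$, and $g(C)\geqslant 2$ together with $U$ rational forces constancy (no dependence on the particular trivialization). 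Once this is in place the rest is formal, so I would keep the write-up short.
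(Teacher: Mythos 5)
Your argument is correct and takes essentially the same route as the paper's own (one-line) proof: the paper likewise invokes the trivialization $C_{U,\xi}\simeq C\times U$ from Lemma~\ref{theorem:isomorphism-to-the-product} and the impossibility of a dominant rational map $\mathbb{P}^1\dashrightarrow C$ for $g(C)=12\geqslant 2$. Your write-up merely makes explicit what the paper leaves implicit, namely that sections of the trivial family correspond to (necessarily constant) morphisms $U\to C$ and that agreement over $U$ propagates to all of $Z$ by separatedness.
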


\begin{proof}
Indeed, otherwise there exists a rational dominant map
$\mathbb{P}^1 \dashrightarrow C$ induced by the projection
$\mathcal{C}_{\scriptscriptstyle U_Z} \simeq C \times U_Z
\longrightarrow C$ on the first factor (see
Lemma~\ref{theorem:isomorphism-to-the-product}), which is
impossible.
\end{proof}

\begin{lemma}
\label{theorem:all-sections-are-over-k} Let $\frak{O}$ be a point
from $\mathcal{C}(\overline{\mathbb{Q}}(t))$ such that its
specialization $O := S \cdot \frak{O}$ at $t = 0$ is a point in
$S(\fie)$. Then we have $\frak{O} \in \mathcal{O}(\fie(t))$.
\end{lemma}

\begin{proof}
The point $O$ is $\mathrm{Gal}(\overline{\mathbb{Q}}/\fie)$-fixed,
which implies that the section $\frak{O}$ is
$\mathrm{Gal}(\overline{\mathbb{Q}}/\fie)$-invariant (otherwise
one gets a contradiction with
Lemma~\ref{theorem:isomorphism-to-the-product-1}), and hence
$\frak{O} \in \mathcal{C}(\fie(t))$.
\end{proof}

\begin{remark}
\label{remark:every-thing-in-U} Consider a $\fie(t)$-curve
$\widehat{\mathcal{C}} \subset \mathcal{S}$ given by the equation
$\widehat{\lambda} = 0$ for some $\widehat{\lambda} \in
(\mathbb{P}^{12})^{\vee}(\fie)$. Then, in the previous notation,
the family $f\big\vert_{\widehat{\mathcal{C}}_{\scriptscriptstyle
U_Z}} : \widehat{\mathcal{C}}_{\scriptscriptstyle U_Z}
\longrightarrow U_Z$ is smooth for every $\widehat{\lambda}$ from
some Zariski open subset $\frak{L} \subset
(\mathbb{P}^{12})^{\vee}(\fie)$ (with $\lambda\in\frak{L}$), where
$\widehat{\mathcal{C}}_{\scriptscriptstyle U_Z} :=
\widehat{\mathcal{C}} \cap f^{-1}(U_Z)$. Moreover, the same
argument as in the proof of
Lemma~\ref{theorem:isomorphism-to-the-product} shows that the
family $f\big\vert_{\widehat{\mathcal{C}}_{\scriptscriptstyle
U_Z}} : \widehat{\mathcal{C}}_{\scriptscriptstyle U_Z}
\longrightarrow U_Z$ is trivial for every such
$\widehat{\lambda}$, and so the statements of
Lemmas~\ref{theorem:isomorphism-to-the-product-1},
\ref{theorem:all-sections-are-over-k} also hold for
$\widehat{\mathcal{C}}_{\scriptscriptstyle U_Z}$ and
$\widehat{\mathcal{C}}$, respectively.
\end{remark}

Let $\frak{U}$ be the set of all points
$\in\mathcal{S}(\overline{\mathbb{Q}}(t))$ as in
Proposition~\ref{theorem:many-seFctions}, with $O$ varying through
$S(\fie)$, $\lambda$ varying through $\frak{L}$ (see
Remark~\ref{remark:every-thing-in-U}). Then from
Proposition~\ref{theorem:my-potential-density} and
Lemma~\ref{theorem:all-sections-are-over-k} we obtain that
$\frak{U} \subset \mathcal{S}(\fie(t))$ is Zariski dense in
$\mathcal{S}$. Thus $\mathcal{S}$ provides an example of
\emph{non-isotrivial} $\mathrm{K3}$ surface over the function
field $\fie(t)$, such that the set of $\mathcal{S}(\fie(t))$ is
Zariski dense, the genus of $\mathcal{S}$ is $12$, and the
geometric Picard number of $\mathcal{S}$ equals $1$ (the latter
follows from Lemma~\ref{theorem:pic-calc}). We can, however,
extract more from the construction of $\mathcal{S}$.

\refstepcounter{equation}
\subsection{}

Firstly, for every $\frak{O} \in \frak{U}$ we have $S_{22} \cdot
\frak{O} \in S_{22}(\fie)$, since both $\frak{O}$ and $S_{22}$ are
defined over $\fie$, hence both are
$\mathrm{Gal}(\overline{\mathbb{Q}}/\fie)$-invariant. Let us
consider the map $h : \frak{U} \longrightarrow S_{22}(\fie)$ given
by $\frak{O} \mapsto S_{22} \cdot \frak{O}$ for all $\frak{O}$.
Put $\frak{U}^* := h(\frak{U})$. Note that the set $\frak{U}^*$ is
infinite (for otherwise $\frak{L}\subset(\mathbb{P}^{12})^{\vee}$
would be of codimension $1$).

Further, after shrinking $\frak{U}$ if necessary, we get the
following:

\begin{lemma}
\label{theorem:1-to-1-lemma} The map $h$ is $1$-to-$1$ onto its
image.
\end{lemma}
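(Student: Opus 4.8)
The plan is to show that the map $h : \frak{U} \longrightarrow S_{22}(K)$, $O_{\xi} \mapsto S_{22} \cdot O_{\xi}$, becomes injective once we discard a ``small'' (Zariski non-dense) part of $\frak{U}$. Recall that every $O_{\xi} \in \frak{U}$ is a section of $f : S_{\xi} \longrightarrow Z$ of the form \eqref{test-section}, lying on a curve $\hat{C}_{\xi} = S_{\xi} \cap (\hat\lambda = 0)$ with $\hat\lambda \in \frak{L}$, and that by Remark~\ref{remark:every-thing-in-U} the family $f\big\vert_{\hat{C}_{U,\xi}} : \hat{C}_{U,\xi} \longrightarrow U$ is trivial, so that (cf. Lemma~\ref{theorem:isomorphism-to-the-product-1} and its analogue for $\hat C_\xi$) through every point of $\hat{C}_{U,\xi}$ there is \emph{at most one} section of $f\big\vert_{\hat{C}_\xi}$. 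Thus a section $O_{\xi}$ lying on $\hat{C}_{\xi}$ is uniquely recovered from the pair $(\hat\lambda, P)$, where $P := S_{22}\cdot O_\xi = \hat{C}_\xi \cap (t=\infty)$ is a point of $\hat{C}_{22} := S_{22}\cap(\hat\lambda=0)$; indeed $O_\xi$ is the unique section of the trivial family $\hat C_{U,\xi}\to U$ through $P$. So the only way $h(O_\xi) = h(O'_\xi)$ can fail to force $O_\xi = O'_\xi$ is if $O_\xi$ and $O'_\xi$ lie on \emph{distinct} curves $\hat C_\xi$, $\hat C'_\xi$ (for distinct $\hat\lambda,\hat\lambda' \in \frak{L}$) but specialise to the \emph{same} point of $S_{22}$ at $t=\infty$.

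To rule this out generically, first I would note the fibre dimension count: $\frak{U}$ is fibred over (an open subset of) $S(K)$ via $O_\xi \mapsto O := S\cdot O_\xi$, and by Remark~\ref{remark:test-set-of-sections} the set of $\lambda$ with $\lambda(O_\xi)=0$ is a linear subspace $\frak{L}_{O_\xi}\subset\mathbb{P}^{12}$ of codimension $\leqslant 2$; hence over a generic $O$ the sections $O_\xi\in\frak U$ lying above $O$ sweep out a positive-dimensional family, parametrised essentially by $\frak L\cap\frak L_{O_\xi}$. Since $\dim S = 2$ while $\dim\frak L = 11$, the source $\frak U$ has dimension $\geqslant 13$ through generic points; by Lemma~\ref{theorem:not-a-point}, $\dim\frak U' > 0$, and more precisely the argument there shows that if $h$ collapsed positive-dimensional fibres generically then the image of $\frak L$ in $\mathbb P^{12}$ (via $\hat\lambda\mapsto\hat\lambda|_{S_{22}}$, i.e.\ the hyperplane cutting $\hat C_{22}$) would be forced into codimension $\geqslant 1$, contradicting that $\frak L$ is Zariski open in $\mathbb P^{12}(K)$. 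I would make this precise: the assignment $O_\xi\mapsto(\hat\lambda,P)$ realises $\frak U$ (after shrinking) as a subvariety of $\frak L\times S_{22}$, and $h$ is the second projection; injectivity after shrinking is equivalent to saying that the first projection $\frak U\to\frak L$ is, over the relevant open part, birational onto its image when combined with $P$ — which follows from the at-most-one-section property together with the fact that a generic point $P\in S_{22}$ lies on the section through it in $\hat C_\xi$ for a \emph{unique} admissible $\hat\lambda$ (two distinct hyperplanes $\hat\lambda,\hat\lambda'$ through $P$ give curves meeting $S_{22}$ transversally in distinct points away from $P$, so their sections cannot coincide identically in $t$ unless $\hat\lambda=\hat\lambda'$, using non-isotriviality of $f$ and Lemma~\ref{theorem:isomorphism-to-the-product-1}).

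Concretely I would argue: let $Z\subset\frak U\times\frak U$ be the locus of pairs $(O_\xi,O'_\xi)$ with $h(O_\xi)=h(O'_\xi)$ and $O_\xi\neq O'_\xi$; I want $Z$ to not dominate $\frak U$ under the first projection, so that replacing $\frak U$ by the complement of $\mathrm{pr}_1(\bar Z)$ (still Zariski dense in $S_\xi$, since $S_\xi(K(t))$-density of $\frak U$ survives removal of a proper closed subset intersected with it — here I use that $\frak U$ itself was obtained by letting $O$ range over the dense set $S(K)$ and $\lambda$ over an \emph{open} $\frak L$, so a codimension-one constraint still leaves a dense set) yields an injective $h$. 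The containment $\mathrm{pr}_1(Z)\subsetneq\frak U$ is exactly the statement that a generic section $O_\xi\in\frak U$ is not matched at $t=\infty$ by any other section, which I get from the two inputs above: (i) a generic $O_\xi$ lies on exactly one curve $\hat C_\xi$ among those cut by $\frak L$ (the map $\hat\lambda\mapsto\hat C_{22}=\hat\lambda|_{S_{22}}$ is injective on $\frak L$ up to scalar, and $P=S_{22}\cdot O_\xi$ determines $\hat C_{22}$ only up to the pencil of hyperplanes through $P$, but the section of $\hat C_\xi\to Z$ through $P$ then determines $\hat\lambda$ by the rigidity in Lemma~\ref{theorem:isomorphism-to-the-product-1}); (ii) on each fixed $\hat C_\xi$, $h$ restricted to $\hat C_{U,\xi}$ is injective since it is literally the isomorphism $\hat C_{U,\xi}\simeq C\times U\to\{t=\infty\}$ followed by an inclusion. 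Combining (i) and (ii) gives injectivity of $h$ on the shrunk $\frak U$.

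The hard part, and the step I expect to be the real obstacle, is (i): controlling how many distinct admissible hyperplanes $\hat\lambda\in\frak L$ can give curves $\hat C_\xi$ whose (unique) sections through a common point $P\in S_{22}$ coincide. A priori two different hyperplane sections of $S_\xi$ through the relevant point could carry sections of $f$ that agree — this is where I must use non-isotriviality of $f$ (so that the curves $\hat C_\xi$ genuinely vary and are not all forced to share structure), the triviality of each $\hat C_{U,\xi}\to U$ with $\#\mathrm{Aut}(C)=1$ from Proposition~\ref{theorem:gen-c-12-aut}, and a dimension count on the incidence variety $\{(\hat\lambda,O_\xi) : O_\xi\subset\hat C_\xi\cap f^{-1}(U)\}\subset\frak L\times S_\xi(\bar K(t))$. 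I would compute that the generic fibre of $\mathrm{pr}_1$ on this incidence variety has the expected dimension (one section through a generic point, with the pencil of $\hat\lambda$ through $P$ contributing the only extra parameter, which is then absorbed), so that $h$ is generically finite of degree one — i.e.\ birational — onto $\frak U'$, and hence $1$-to-$1$ after shrinking. If the naive count does not immediately give degree one, the fallback is to note that $h$ is generically finite (degree $d<\infty$) and then to replace $\frak U$ by a single irreducible component of a fibre of the degree-$d$ cover, which is still Zariski dense in $S_\xi$ because the cover is defined over $K$ and $\frak U\subset S_\xi(K(t))$ is dense; on that component $h$ is $1$-to-$1$.
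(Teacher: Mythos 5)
Your proposal correctly isolates the right tool --- the at-most-one-section property of the trivialized families $\hat{C}_{U,\xi} \to U$ (Lemma~\ref{theorem:isomorphism-to-the-product-1} and Remark~\ref{remark:every-thing-in-U}) --- and correctly reduces the problem to comparing two sections through the same point of $S_{22}$ that lie on \emph{different} hyperplane-section curves. But your step (i), which you yourself flag as ``the real obstacle,'' is where the argument breaks, and it breaks because the claim is false: a generic $O_{\xi} \in \frak{U}$ does \emph{not} lie on exactly one curve $\hat{C}_{\xi}$. By Remark~\ref{remark:test-set-of-sections} (which you quote in your second paragraph and then contradict in your third), the set $\frak{L}_{O_{\xi}}$ of hyperplanes $\lambda$ with $\lambda(O_{\xi})=0$ is a \emph{linear subspace of codimension at most $2$} in $\mathbb{P}^{12}$, so each section lies on an at least $10$-dimensional family of curves $\hat{C}_{\xi}$. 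The point $P = S_{22}\cdot O_{\xi}$ plus ``the section through $P$'' cannot determine $\hat{\lambda}$, so your mechanism for ruling out collisions across distinct curves does not exist, and the subsequent incidence-variety count is never actually carried out. The fallback (``$h$ is generically finite of degree $d$, pass to a component of a fibre of the cover'') is also not a proof: $\frak{U}$ is a countable set of $K(t)$-sections, not a variety, and it is not explained why such a component would remain Zariski dense in $S_{\xi}$.

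The paper's proof turns the abundance of hyperplanes through a given section from a bug into the feature that closes the argument: since $\frak{L}_{O_{\xi}}$ has codimension $\leqslant 2$ and $\frak{L}$ is open (and $\dim\frak{U}'>0$ by Lemma~\ref{theorem:not-a-point}), one can choose $\lambda \in \frak{L}_{O_{\xi}} \cap \frak{L}$; then, running the construction of Remark~\ref{remark:test-set-of-sections} with the roles of $S$ and $S_{22}$ switched, for a general $\lambda'$ through $O$ one produces a section $O'_{\xi}$ lying on the \emph{common} curve $(\lambda=0)$ and specializing to $O$, whence $O'_{\xi}=O_{\xi}$ by Lemma~\ref{theorem:isomorphism-to-the-product-1}. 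In other words, any two candidate preimages of $O$ are forced onto a single curve $\hat{C}_{\xi}$, where the at-most-one-section property identifies them. This linear-algebra step (two subspaces of codimension $\leqslant 2$ inside $\mathbb{P}^{12}$ always meet) is the missing idea in your write-up; without it, or some substitute, the proof is incomplete.
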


\begin{proof}
Recall that according to Remark~\ref{remark:test-set-of-sections}
we can switch the roles between $S$ and $S_{22}$. Let us choose
then some general point $O \in \frak{U}^*$ and consider the
codimension $2$ subspace $\frak{L}_{\frak{O}}\subset
(\mathbb{P}^{12})^{\vee}(\fie)$ of all $\frak{O} \in \frak{U}$ for
which $h(\frak{O}) = S_{22} \cdot \frak{O} = O$. Now, since
$\frak{U}^*$ is infinite, we may assume that $\lambda \in
\frak{L}_{\frak{O}} \cap \frak{L}$. Note also that
Lemmas~\ref{theorem:isomorphism-to-the-product-1} and
\ref{theorem:all-sections-are-over-k} imply the definition of
$h(\frak{O})$ is independent of any other $\widehat{\lambda}\in
\frak{L}_{\frak{O}}$. Hence, for varying $\widehat{\lambda}$, we
simply take $\frak{O}$ to be the common point (from
$\mathcal{S}(\fie(t))$) of all $\mathcal{S}\cap (\widehat{\lambda}
= 0)$. This gives the $1$-to-$1$ statement.
\end{proof}

\begin{remark}
\label{remark:sect} Let us denote again by $\mathcal{S}$ and
$\frak{O}$ the images in $G(3, 7)$ of (the $\fie$-threefold)
$\mathcal{S}$ and (the $\fie$-curve) $\frak{O} \in\frak{U}$,
respectively, under the projection $G_Z = G(3, 7) \times Z
\longrightarrow G(3, 7)$. Then (the proof of)
Lemma~\ref{theorem:1-to-1-lemma} shows that $\mathcal{S} \cap \Pi
= \frak{O}$ for a codimension $12$ subspace $\Pi \subset
\mathbb{P}(\bigwedge^{3}H^{0}(G(3, 7),\mathcal{E}_{3})^{\vee})$
complementary to $\mathbb{P}^{12}\supset S,S_{22}$.
\end{remark}

\begin{lemma}
\label{theorem:1-to-2-lemma} The set $\frak{U}^*$ is Zariski dense
in $S_{22}$.
\end{lemma}

\begin{proof}
Let $E_i$ be as in Proposition~\ref{theorem:my-potential-density}.
Then by Remark~\ref{remark:sect} the Zariski closure of the set
$\{\frak{O} \in \frak{U} \ \big\vert \ O \in E_i\}$ in
$\mathcal{S}$ determines a cycle $E'_i\in
H^{1,1}(S_{22},\mathbb{Z})$ such that the sets $E'_i \cap
S_{22}(\fie)$ are infinite for all $i$ (see
Lemma~\ref{theorem:1-to-1-lemma}).

Note that the above family $f : \mathcal{S} \longrightarrow Z$ is
differentially trivial over $U_Z$. Then the induced diffeomorphism
between $S$ and $S_{22}$ maps every $E_i$ onto the corresponding
$E'_i$ by the monodromy invariance of all section $\frak{O}$ of
$f$. In particular, the union $\displaystyle\bigcup_{i =
1}^{\infty} E'_i$ is not contained in any curve on $S_{22}$, and
hence it is Zariski dense. Moreover, since the sets $E'_i \cap
S_{22}(\fie)$ are infinite for all $i$, we obtain that
$\displaystyle\bigcup_{i = 1}^{\infty} E'_i \cap S_{22}(\fie)
\subseteq \frak{U}^*$ is Zariski dense in $S_{22}$. Indeed,
otherwise all $E'_i \subseteq$ some \emph{algebraic} curve, a
contradiction.
\end{proof}

Lemmas~\ref{theorem:1-to-2-lemma} and \ref{theorem:pic-calc}
complete the proof of Theorem~\ref{theorem:main}.

\bigskip

\end{document}